\newtheorem{thm}{Theorem}[section]
\newtheorem{definition}{Definition}[section]
\newtheorem{lemma}{Lemma}[section]
\newtheorem{corollary}{Corollary}[section]
\begin{document}\large{
\title{Direct and inverse results
on row sequences of simultaneous Pad\'{e}-Faber approximants}
\author{N. Bosuwan\thanks{The research of N. Bosuwan was supported by the Strengthen Research Grant for New Lecturer from the Thailand Research Fund and the Office of the Higher Education Commission (MRG6080133) and Faculty of Science, Mahidol University.}\,\,\,\footnote{Corresponding author.} and  G. L\'opez Lagomasino\thanks{The research of G. L\'opez Lagomasino was supported by research grant MTM2015-65888-C4-2-P from Ministerio de Econom\'ia, Industria y Competitividad}}}
\maketitle

\begin{abstract} Given a vector function $\textup{\textbf{F}}=(F_1,\ldots,F_d),$ analytic on a neighborhood of some compact subset $E$ of the complex plane with simply connected complement, we define a sequence of vector rational functions with common denominator in terms of the expansions of the components $F_k, k=1,\ldots,d,$ with respect to the sequence of Faber polynomials associated with $E$. Such sequences of vector rational functions are analogous to row sequences of type II Hermite-Pad\'e approximation. We give necessary and sufficient conditions for the convergence with geometric rate of the common denominators of the sequence of vector rational functions so constructed. The exact rate of convergence of these denominators is provided and the rate of convergence of the approximants is estimated. It is shown that the common denominators of the approximants detect the poles of the system of functions ``closest'' to $E$ and their order.
\end{abstract}


\vspace{0,7cm}

\noindent
{\bf Keywords:} Montessus de Ballore's Theorem $\cdot$ Faber polynomials $\cdot$  Simultaneous approximation  $\cdot$
Hermite-Pad\'e approximation $\cdot$ Rate of convergence $\cdot$ Inverse results

\vspace{0,5cm}\noindent
{\bf Mathematics Subject Classification (2010):} Primary 30E10 $\cdot$ 41A21 $\cdot$ 41A28 $\cdot$ Secondary 41A25 $\cdot$ 41A27

\section{Introduction}

The object of this paper is to prove a Montessus de Ballore-Gonchar type theorem for simultaneous Pad\'{e}-Faber approximants analogous to the one obtained in \cite{CacoqYsernLopez} in the context of Hermite-Pad\'e approximation. Such results, motivated in \cite{Aag81}, include a direct part where convergence of the approximants and their poles is derived provided that the functions being approximated have convenient analytic properties, and an inverse statement in which starting out from the asymptotic properties of the poles of the approximants some important analytic properties of the functions being approximated are determined. For scalar functions, several approximating models have been explored which in one way or another extend the notion of Pad\'e approximation, for example, see \cite{Vib2009,Aag81,Sutinpade}. To avoid unnecessary repetitions, in the introduction of   \cite{Vib2009,CacoqYsernLopez,MorrisSaff} you can find an account of the history of the problem. We wish to mention that in \cite{BosuwanLopez} we studied a similar problem when the approximants are built on the basis of orthogonal expansions.

\medskip

Let us clarify what we understand as a pole of a vector function and its order.

\begin{definition}\textup{
Let $\boldsymbol \Omega:=(\Omega_1,\Omega_2,\ldots,\Omega_d)$ be a system of domains such that, for each $\alpha=1,2,\ldots,d,$ $F_{\alpha}$ is meromorphic in $\Omega_{\alpha}.$ We say that the point \emph{$\lambda$ is a pole of $\textup{\textbf{F}}:=(F_1,F_2,\ldots,F_d)$ in $\boldsymbol \Omega$ of order $\tau$} if there exists an index $\alpha\in \{1,2,\ldots,d\}$ such that $\lambda \in \Omega_{\alpha}$ and it is a pole of $F_{\alpha}$ of order $\tau,$ and for $\beta\not=\alpha$ either $\lambda$ is a pole of $F_{\beta}$ of order less than or equal to $\tau$ or $\lambda\not\in \Omega_{\beta}.$ When $\boldsymbol \Omega=(\Omega,\Omega,\ldots,\Omega),$ we say that \emph{$\lambda$ is a pole of $\textup{\textbf{F}}$ in $\Omega.$}
}
\end{definition}

Let $E$ be a compact subset of the complex plane $\mathbb{C}$ such that  $\overline{\mathbb{C}}\setminus E$ is simply connected and $E$ contains more than one point. It is convenient to assume that $0\in E$ and this can be done, if necessary, without loss of generality making a change of variables. There exists a unique exterior conformal mapping $\Phi$ from $\overline{\mathbb{C}}\setminus E$ onto $\overline{\mathbb{C}}\setminus \{w\in \mathbb{C}: |w|\leq 1\}$ satisfying $\Phi(\infty)=\infty$ and $\Phi'(\infty) := \lim_{z\to \infty} \Phi(z)/z >0.$ It is well known that $\Phi'(\infty)= 1/\mbox{\rm cap}(E)$ where $\mbox{\rm cap}(E)$ is the logarithmic capacity of $E$.
For any $\rho>1,$  we define
$$\Gamma_{\rho}:=\{z\in \mathbb{C}: |\Phi(z)|=\rho\} \quad \quad \mbox{and} \quad \quad D_{\rho}:=E\cup \{z\in \mathbb{C}: |\Phi(z)|<\rho\},$$
as the \emph{level curve of index $\rho$} and the \emph{canonical domain of index $\rho$}, respectively.

Denote by $\mathcal{H}(E)$ the space of all functions holomorphic in some neighborhood of $E.$ We define
$$\mathcal{H}(E)^d:=\{(F_1,F_2,\ldots,F_d): \textup{$F_\alpha\in \mathcal{H}(E)$ for all $\alpha=1,2,\ldots,d$}\}.$$

\medskip

Let $\textup{\textbf{F}}\in \mathcal{H}(E)^d.$ Denote by $\rho_0(\textup{\textbf{F}})$ the index $\rho$ of the largest canonical domain $D_{\rho}$ to which all $F_{\alpha},$ $\alpha=1,\ldots,d,$ can be extended as holomorphic functions and by $\rho_{m}(\textup{\textbf{F}})$  the index $\rho$ of the largest canonical domain $D_\rho$ to which all $F_\alpha,$ $\alpha=1,\ldots,d$ can be extended so that $\textup{\textbf{F}}$ has at most $m$ poles counting multiplicities.

\medskip

The \emph{Faber polynomial} of $E$ of degree $n$ is defined by the formula
\begin{equation}\label{polynomial}
 \Phi_n(z):=\frac{1}{2\pi i}\int_{\Gamma_{\rho}} \frac{\Phi^n(t)}{t-z}dt, \quad\quad z\in D_{\rho}, \quad \quad n=0,1,2,\ldots.
 \end{equation}
It equals the polynomial part of the Laurent expansion of $\Phi^n$ at infinity. Notice that
\begin{equation}
\label{capa}
\Phi_n(z) =  \left(z/\mbox{\rm cap}(E)\right)^n + \mbox{lower degree terms}.
\end{equation}
The $n$-th \emph{Faber coefficient} of $G\in \mathcal{H}(E)$ with respect to $\Phi_n$ is given by
\begin{equation*}\label{Fourierco}
[G]_n:=\frac{1}{2\pi i}\int_{\Gamma_\rho} \frac{G(t) \Phi'(t)}{\Phi^{n+1}(t)} dt,
\end{equation*}
where $\rho\in (1,\rho_{0}(G))$ and $\rho_{0}(G)$ denotes the index of the largest canonical region to which $G$ can be extended as a holomorphic function.
For an account on Faber polynomials and its properties see \cite{SmirnovLebedev,Suetin}. In particular, it is well known that
\begin{equation}
\label{conv}
\lim_{n\to \infty} |\Phi_n(z)|^{1/n} = |\Phi(z)|,
\end{equation}
uniformly on compact subsets of $ \mathbb{C} \setminus E$.

\medskip

Let us introduce simultaneous Pad\'{e}-Faber approximants.

\begin{definition}\label{simu}\textup{ Let $\textup{\textbf{F}}=(F_1,\ldots,F_d)\in \mathcal{H}(E)^d.$  Fix  $\textup{\textbf{m}}=(m_1,\ldots,m_d)\in \mathbb{N}^d$ and  $n\in \mathbb{N}.$  Set $|\textup{\textbf{m}}|:=m_1+m_2+\ldots+m_d.$ Then, there exist polynomials $Q_{n,\textup{\textbf{m}}},$ $P_{n,\textup{\textbf{m}},k,\alpha},$   such that
\begin{equation}\label{simu1}
\deg P_{n,\textup{\textbf{m}},k,\alpha}\leq n-1, \quad \quad  \deg(Q_{n,\textup{\textbf{m}}})\leq |\textup{\textbf{m}}|, \quad \quad Q_{n,\textup{\textbf{m}}}\not\equiv 0,
\end{equation}
\begin{equation}\label{simu2}
[ Q_{n,\textup{\textbf{m}}} z^{k} F_{\alpha}-P_{n,\textup{\textbf{m}},k,\alpha}]_j=0,   \quad \quad j=0,1,\ldots,n,
\end{equation}
for all $k=0,1,\ldots,m_{\alpha}-1$ and $\alpha=1,2,\ldots,d.$
The vector of rational functions
$$\textup{\textbf{R}}_{n,\textup{\textbf{m}}}:=(R_{n,\textup{\textbf{m}},1},\ldots,R_{n,\textup{\textbf{m}},d})=(P_{n,\textup{\textbf{m},0,1}}, \ldots, P_{n,\textup{\textbf{m},0,d}})/Q_{n,\textup{\textbf{m}}}$$ is called an \emph{$(n,\textup{\textbf{m}})$ simultaneous Pad\'{e}-Faber approximant of $\textup{\textbf{F}}.$}}
\end{definition}

Clearly,
\begin{equation}\label{denosim}
[Q_{n,\textup{\textbf{m}}} z^{k} F_{\alpha}]_n=0,\quad \quad \alpha=1,\ldots,d,  \quad \quad k=0,1,\ldots,m_{\alpha}-1.
\end{equation} Since $Q_{n,\textup{\textbf{m}}}\not\equiv 0,$ we normalize it to have leading coefficient equal to $1.$ We call $Q_{n,\textup{\textbf{m}}}$ \emph{the denominator of the $(n,\textup{\textbf{m}})$ simultaneous Pad\'{e}-Faber approximant of $\textup{\textbf{F}}$}.

\medskip

Finding a solution of \eqref{simu1}-\eqref{simu2} reduces to solving a homogeneous system of $(n+1)|\textup{\textbf{m}}|$ linear equations on $(n+1)|\textup{\textbf{m}}|+1$  coefficients of $Q_{n,\textup{\textbf{m}}}$ and $P_{n,\textup{\textbf{m}},k,\alpha}.$ Therefore, for any pair $(n,\textup{\textbf{m}})\in \mathbb{N}\times \mathbb{N}^d,$ a vector of rational functions $\textup{\textbf{R}}_{n,\textup{\textbf{m}}}$ always exists. In general, it may not be unique. For each $n,$ we choose one solution.
The definition of simultaneous Pad\'{e}-Faber approximants employed here differs from the one used in \cite{Bosuwan} which may seem more natural but has serious inconveniences for proving inverse type results.

\medskip

Notice that \eqref{simu2} implies that linear combinations of the functions $z^kF_\alpha, 0\leq k < m_\alpha, \alpha = 1,\ldots,d$ also verify \eqref{simu2} (with the same $Q_{n,{\bf m}}$ and convenient polynomial $P,$ $ \deg P < n$). This motivates the concept of system pole. Systems poles may not coincide with the poles of the individual functions $F_\alpha$ (see examples in \cite{CacoqYsernLopez}).

\begin{definition}\label{systempoleofordernew}
\textup{ Given $\textup{\textbf{F}}=(F_1,\ldots,F_d)\in\mathcal{H}(E)^d$ and $\textup{\textbf{m}}=(m_1,\ldots,m_d)\in \mathbb{N}^d $, we say that $\xi\in \mathbb{C}$ is a \emph{system pole of order $\tau$ of $\textup{\textbf{F}}$ with respect to $\textup{\textbf{m}}$} if $\tau$ is the largest positive integer such that for each $t=1,2,\ldots,\tau,$ there exists at least one polynomial combination of the form
\begin{equation}\label{polycom}
\sum_{\alpha=1}^d v_{\alpha} F_{\alpha},\quad \quad \deg (v_{\alpha})<m_{\alpha},\quad \quad \alpha=1,2,\ldots,d,
\end{equation}
which is holomorphic on a neighborhood of $\overline{D}_{|\Phi(\xi)|}$ except for a pole at $z=\xi$ of exact order $t.$ }
\end{definition}

To each system pole $\xi$ of $\textup{\textbf{F}}$ with respect to $\textup{\textbf{m}},$ we associate several characteristic values. Let $\tau$ be the order of $\xi$ as a system pole of $\textup{\textbf{F}}.$ For each $t=1,\ldots,\tau,$ denote by $\rho_{\xi,t}(\textup{\textbf{F}},\textup{\textbf{m}})$ the largest of all the numbers $\rho_{t}(G)$ (the index of the largest canonical domain containing at most $t$ poles of $G$), where $G$ is a polynomial combination of type \eqref{polycom} that is holomorphic on a neighborhood of $\overline{D}_{|\Phi(\xi)|}$ except for a pole at $z=\xi$ of order $t.$ There is only a finite number of such possible values so the maximum is indeed attained. Then, we define
$$\boldsymbol\rho_{\xi,t} (\textup{\textbf{F}}, \textup{\textbf{m}}):=\min_{k=1,\ldots,t} \rho_{\xi, k}(\textup{\textbf{F}}, \textup{\textbf{m}}),$$
$$\boldsymbol\rho_{\xi} (\textup{\textbf{F}}, \textup{\textbf{m}}):=\boldsymbol\rho_{\xi,\tau} (\textup{\textbf{F}}, \textup{\textbf{m}})=\min_{k=1,\ldots, \tau} \rho_{\xi, k} (\textup{\textbf{F}},\textup{\textbf{m}}).$$

Fix $\alpha\in\{1,\ldots,d\}$. Let $D_{{\alpha}}(\textup{\textbf{F}},\textup{\textbf{m}})$ be the largest canonical domain in which all the poles of $F_{\alpha}$ are system poles of $\textup{\textbf{F}}$ with respect to $\textup{\textbf{m}},$ their order as poles of $F_{\alpha}$ does not exceed their order as system poles, and $F_{\alpha}$ has no other singularity. By $\boldsymbol\rho_{\alpha}(\textup{\textbf{F}},\textup{\textbf{m}}),$ we denote the index of this canonical domain. Let $\xi_{1},\ldots,\xi_N$ be the poles of $F_{\alpha}$ in $D_{\alpha}(\textup{\textbf{F}},\textup{\textbf{m}}).$  For each $j=1,\ldots,N,$ let $\hat{\tau}_j$ be the order of $\xi_j$ as a pole of $F_{\alpha}$ and $\tau_j$ its order as a system pole. By assumption, $\hat{\tau}_j\leq \tau_j.$ Set
$$\boldsymbol\rho_{\alpha}^{*}(\textup{\textbf{F}},\textup{\textbf{m}}):=\min\{ \boldsymbol\rho_{\alpha}(\textup{\textbf{F}},\textup{\textbf{m}}),\min_{j=1,\ldots,N} \boldsymbol\rho_{\xi_j,\hat{\tau}_j}(\textup{\textbf{F}},\textup{\textbf{m}})\}$$
and let $D_{\alpha}^{*}(\textup{\textbf{F}},\textup{\textbf{m}})$ be the canonical domain with this index. We have assumed that $0 \in E$ where all the functions $F_\alpha$ are holomorphic; consequently, for a fixed $\alpha$ if we were to define an analogous quantity for $z^kF_\alpha$ we would obtain the same
number $\boldsymbol\rho_{\alpha}^{*}(\textup{\textbf{F}},\textup{\textbf{m}})$ independently of $k$.

\medskip

By $Q_{\textup{\textbf{m}}}^{\textup{\textbf{F}}},$ we denote the monic polynomial whose zeros are the system poles of $\textup{\textbf{F}}$ with respect to $\textup{\textbf{m}}$ taking account of their order. The set of distinct zeros of $Q_{\textup{\textbf{m}}}^{\textup{\textbf{F}}}$ is denoted by $\mathcal{P}(\textup{\textbf{F}},\textup{\textbf{m}}).$

\medskip

We are ready to state the direct result.

\begin{thm}\label{thm1.4} Let $\textup{\textbf{F}}=(F_1,\ldots,F_d)\in \mathcal{H}(E)^d$ and let  $\textup{\textbf{m}}\in \mathbb{N}^d$ be a fixed multi-index. Suppose that $\textup{\textbf{F}}$ has exactly $|\textup{\textbf{m}}|$ system poles with respect to $\textup{\textbf{m}}$ counting multiplicities. Then, for all  sufficiently large $n$, the polynomials $Q_{n,\textup{\textbf{m}}}$ and the approximants $R_{n,\textup{\textbf{m}},\alpha}$ are uniquely determined,
\begin{equation}\label{2.5}
\limsup_{n \rightarrow \infty} \|Q_{n,\textup{\textbf{m}}}-Q_{\textup{\textbf{m}}}^{\textup{\textbf{F}}}\|^{1/n}=
\max \left\{\frac{|\Phi(\xi)|}{\boldsymbol\rho_{\xi}(\textup{\textbf{F}},\textup{\textbf{m}})} :\xi\in\mathcal{P}(\textup{\textbf{F}},\textup{\textbf{m}})\right\},
\end{equation}
where $\|\cdot\|$ denotes the coefficient norm in the space of polynomials. For any $\alpha=1,\ldots,d,$ $k=1,\ldots,m_\alpha -1$, and
any compact subset $K$ of $D_{\alpha}^{*}(\textup{\textbf{F}},\textup{\textbf{m}})\setminus \mathcal{P}(\textup{\textbf{F}},\textup{\textbf{m}}),$
\begin{equation}\label{approximation}
\limsup_{n \rightarrow \infty} \left\|\frac{P_{n,\textup{\textbf{m}},k,\alpha}}{Q_{n,\textup{\textbf{m}}}}-z^{k}F_{\alpha}\right\|_{K}^{1/n}\leq \frac{\|\Phi\|_K}{\boldsymbol\rho_{\alpha}^{*}(\textup{\textbf{F}},\textup{\textbf{m}})}.
\end{equation}
where $\|\cdot \|_{K}$ denotes the sup-norm on $K$ and if $K\subset E,$ then $\|\Phi\|_K$ is replaced by $1.$
\end{thm}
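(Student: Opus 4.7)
The plan is to reduce Theorem \ref{thm1.4} to an analysis of the orthogonality relations \eqref{denosim}, reformulated via the integral representation of Faber coefficients. First I would rewrite
\begin{equation*}
[Q_{n,\mathbf{m}}\, z^k F_\alpha]_n \;=\; \frac{1}{2\pi i} \int_{\Gamma_\rho} Q_{n,\mathbf{m}}(t)\, t^k F_\alpha(t)\, \frac{\Phi'(t)}{\Phi^{n+1}(t)}\,dt \;=\; 0,
\end{equation*}
and, by linearity, deduce $[Q_{n,\mathbf{m}} H]_n = 0$ for every polynomial combination $H=\sum_{\alpha=1}^d v_\alpha F_\alpha$ with $\deg v_\alpha < m_\alpha$. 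This is precisely the class of functions that enters Definition \ref{systempoleofordernew}: to each system pole $\xi$ of order $t\le \tau$ corresponds a combination $H$ meromorphic on a neighborhood of $\overline{D}_{|\Phi(\xi)|}$ with a unique pole of order $t$ at $\xi$. Deforming $\Gamma_\rho$ outward through this canonical domain and collecting residues converts the vanishing Faber coefficient into a finite-rank residue condition on $Q_{n,\mathbf{m}}$ at $\xi$.

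With this machinery in place, the first step of the main argument is to show that the monic polynomials $Q_{n,\mathbf{m}}$ form a bounded sequence in their $|\mathbf{m}|+1$ dimensional coefficient space, so a normal family argument applies. Combined with the residue conditions above, every cluster point of $\{Q_{n,\mathbf{m}}\}$ must vanish to order at least $\tau$ at each system pole $\xi$ of order $\tau$; since $\deg Q_{n,\mathbf{m}}\le|\mathbf{m}|$ and there are exactly $|\mathbf{m}|$ system poles counting multiplicity by hypothesis, the only possible cluster point is $Q_{\mathbf{m}}^{\mathbf{F}}$. Hence $Q_{n,\mathbf{m}}\to Q_{\mathbf{m}}^{\mathbf{F}}$, and uniqueness of the solution of \eqref{simu1}--\eqref{simu2} for all sufficiently large $n$ follows from a perturbation argument applied to the resulting square linear system.

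The delicate part is the exact rate \eqref{2.5}. For the upper bound I would, at each $\xi\in\mathcal{P}(\mathbf{F},\mathbf{m})$, apply the identity $[Q_{n,\mathbf{m}} H_{\xi,t}]_n=0$ with combinations $H_{\xi,t}$ that realize $\boldsymbol\rho_{\xi}(\mathbf{F},\mathbf{m})$; pushing the contour to a level curve of index $\rho$ just below $\boldsymbol\rho_\xi(\mathbf{F},\mathbf{m})$ and extracting the leading residue at $\xi$ shows that the coefficients of $Q_{n,\mathbf{m}}-Q_{\mathbf{m}}^{\mathbf{F}}$ decay at least as fast as $(|\Phi(\xi)|/\boldsymbol\rho_\xi(\mathbf{F},\mathbf{m}))^n$. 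Taking the maximum over $\xi$ produces the upper bound. The lower bound is the principal obstacle: one argues by contradiction, showing that a strictly faster decay would permit a further contour deformation and produce a polynomial combination of the form \eqref{polycom} holomorphic strictly beyond the canonical domain determining $\boldsymbol\rho_\xi(\mathbf{F},\mathbf{m})$, contradicting the maximality of that index in Definition \ref{systempoleofordernew}. I expect this step to parallel the scheme of \cite{CacoqYsernLopez}, adapted to the Faber kernel $\Phi'(t)/\Phi^{n+1}(t)$ via the asymptotic \eqref{conv}.

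Finally, for \eqref{approximation} I would write
\begin{equation*}
z^k F_\alpha(z) - \frac{P_{n,\mathbf{m},k,\alpha}(z)}{Q_{n,\mathbf{m}}(z)} \;=\; \frac{Q_{n,\mathbf{m}}(z)\, z^k F_\alpha(z)-P_{n,\mathbf{m},k,\alpha}(z)}{Q_{n,\mathbf{m}}(z)},
\end{equation*}
represent the numerator by the tail of its Faber series (whose coefficients vanish through index $n$ by \eqref{simu2}), and rewrite this tail as a contour integral over $\Gamma_\rho$. The definition of $\boldsymbol\rho_\alpha^*(\mathbf{F},\mathbf{m})$ is tailored so that this contour may be deformed to any $\rho<\boldsymbol\rho_\alpha^*(\mathbf{F},\mathbf{m})$: the poles of $F_\alpha$ interior to $D_\alpha^*(\mathbf{F},\mathbf{m})$ are cancelled by zeros of $Q_{n,\mathbf{m}}$ through the previous step, while the auxiliary indices $\boldsymbol\rho_{\xi_j,\hat\tau_j}$ control the rate of this cancellation at finite $n$. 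Letting $\rho \to \boldsymbol\rho_\alpha^*(\mathbf{F},\mathbf{m})^-$ and dividing by $Q_{n,\mathbf{m}}$, whose zeros avoid $K$ for large $n$ by the convergence $Q_{n,\mathbf{m}}\to Q_{\mathbf{m}}^{\mathbf{F}}$, yields \eqref{approximation}.
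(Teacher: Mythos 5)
Your overall strategy coincides with the paper's: extend the orthogonality $[Q_{n,\mathbf{m}}z^kF_\alpha]_n=0$ to polynomial combinations of type \eqref{polycom}, convert it by contour deformation into residue conditions at the system poles, deduce geometric decay of $Q_{n,\mathbf{m}}$ and its derivatives there, recover $Q_{n,\mathbf{m}}\to Q^{\mathbf{F}}_{\mathbf{m}}$ by Hermite interpolation, prove sharpness of \eqref{2.5} by contradiction, and estimate the approximation error through the tail of the Faber expansion of $Q_{n,\mathbf{m}}z^kF_\alpha-P_{n,\mathbf{m},k,\alpha}$ after cancelling the poles of $F_\alpha$.

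The one step that would fail as written is your ``first step of the main argument'': showing that the monic polynomials $Q_{n,\mathbf{m}}$ form a bounded sequence in coefficient space. There is no direct route to this. The defining system is homogeneous and underdetermined, and normalizing the leading coefficient to $1$ puts no a priori bound on the remaining coefficients; worse, your residue estimates already need a uniform bound on $\|Q_{n,\mathbf{m}}\|_{\Gamma_\rho}$ to produce the $O(\rho^{-n})$ constants, so the order you propose is circular. The paper resolves this by working instead with $q_{n,\mathbf{m}}$ normalized so that the sum of the moduli of its coefficients equals $1$ (bounded by construction), proving the decay of $q_{n,\mathbf{m}}^{(j)}(\xi)$ for that normalization, writing $q_{n,\mathbf{m}}-\lambda_{n,|\mathbf{m}|}Q^{\mathbf{F}}_{\mathbf{m}}$ in the Hermite interpolation basis to obtain geometric convergence, and only then deducing $\liminf_n|\lambda_{n,|\mathbf{m}|}|>0$ (otherwise the normalization would be violated); this is what finally legitimizes passing to the monic polynomials and yields the degree and uniqueness claims. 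A second, smaller point: in your contradiction argument for the equality in \eqref{2.5} you must know that the Faber coefficients $[Q^{\mathbf{F}}_{\mathbf{m}}G]_n$ decay at exactly the rate $1/\boldsymbol\rho_{\zeta}(\mathbf{F},\mathbf{m})$, i.e., that $Q^{\mathbf{F}}_{\mathbf{m}}G$ genuinely has a singularity on the boundary of $D_{\rho_s(G)}$; the paper proves this by showing that otherwise one could construct a combination $G_1$ of type \eqref{polycom} with $\rho_s(G_1)>\rho_s(G)$, contradicting the extremality of $G$. With those two repairs your outline matches the published proof.
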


In the inverse direction, we have

\begin{thm}\label{inverse} Let $\textup{\textbf{F}}=(F_1,F_2,\ldots,F_d)\in \mathcal{H}(E)^d$ and  $\textup{\textbf{m}}\in \mathbb{N}^d$ be a fixed multi-index.   Suppose that the polynomials $Q_{n,\textup{\textbf{m}}}$ are uniquely determined for all sufficiently large $n$ and there exists a polynomial $Q_{|\textup{\textbf{m}}|}$ of degree $|\textup{\textbf{m}}|$ such that
$$\limsup_{n \rightarrow \infty} \|Q_{n,\textup{\textbf{m}}}-Q_{|\textup{\textbf{m}}|}\|^{1/n}=\theta<1.$$
Then, $\textup{\textbf{F}}$ has exactly $|\textup{\textbf{m}}|$ system poles with respect to $\textup{\textbf{m}}$ counting multiplicities and $Q_{|\textup{\textbf{m}}|}=Q_{\textup{\textbf{m}}}^{\textup{\textbf{F}}}.$
\end{thm}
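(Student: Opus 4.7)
The plan is to run the direct Theorem \ref{thm1.4} in reverse. Using the hypothesis $\|Q_{n,\textup{\textbf{m}}}-Q_{|\textup{\textbf{m}}|}\|^{1/n}\to\theta<1$, I would transfer the Faber-coefficient orthogonality \eqref{denosim} to the limit polynomial $Q_{|\textup{\textbf{m}}|}$ and from it recover the system pole structure of $\textup{\textbf{F}}$. The argument parallels the classical converse of Montessus de Ballore due to Gonchar, as well as its Hermite--Pad\'e analogue in \cite{CacoqYsernLopez}.

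The first step is to pass \eqref{denosim} to the limit. For any polynomial combination $G=\sum_\alpha v_\alpha F_\alpha$ with $\deg v_\alpha<m_\alpha$, the identity $[Q_{n,\textup{\textbf{m}}}G]_n=0$ gives
$$[Q_{|\textup{\textbf{m}}|}G]_n \;=\; [(Q_{|\textup{\textbf{m}}|}-Q_{n,\textup{\textbf{m}}})G]_n.$$
A contour estimate on $\Gamma_\rho$, $1<\rho<\rho_0(G)$, combined with $\|Q_{|\textup{\textbf{m}}|}-Q_{n,\textup{\textbf{m}}}\|^{1/n}\to\theta$, delivers $\limsup|[Q_{|\textup{\textbf{m}}|}G]_n|^{1/n}\le\theta/\rho_0(G)$. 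By the Cauchy--Hadamard formula for Faber series (together with \eqref{conv}), this forces $Q_{|\textup{\textbf{m}}|}G$ to extend holomorphically to $D_{\rho_0(G)/\theta}$; equivalently, $G$ extends meromorphically to that larger canonical domain, with poles lying only at zeros of $Q_{|\textup{\textbf{m}}|}$ and of orders bounded by the corresponding multiplicities. Applied to the combinations that witness each system pole of $\textup{\textbf{F}}$ in the sense of Definition \ref{systempoleofordernew}, this gives $Q_{\textup{\textbf{m}}}^{\textup{\textbf{F}}}\mid Q_{|\textup{\textbf{m}}|}$ and in particular $\deg Q_{\textup{\textbf{m}}}^{\textup{\textbf{F}}}\le|\textup{\textbf{m}}|$.

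The central step is the opposite inequality: every zero $\xi$ of $Q_{|\textup{\textbf{m}}|}$ of multiplicity $\tau$ is itself a system pole of order at least $\tau$. Enumerate the distinct zeros as $\xi_1,\ldots,\xi_N$ with multiplicities $\tau_1,\ldots,\tau_N$, $\sum\tau_j=|\textup{\textbf{m}}|$, ordered $|\Phi(\xi_1)|\le\cdots\le|\Phi(\xi_N)|$. Consider the $|\textup{\textbf{m}}|$-dimensional space $V$ of admissible $v=(v_1,\ldots,v_d)$, $\deg v_\alpha<m_\alpha$, and the principal-part map
$$\Psi: V \longrightarrow \bigoplus_{j=1}^N \mathbb{C}^{\tau_j},\qquad v\longmapsto\bigl(\text{principal parts of } G_v \text{ at } \xi_j \text{ up to order } \tau_j\bigr)_{j=1}^N,$$
which is well-defined by the first step. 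A vector $v\in\ker\Psi$ yields, upon iterating the first step (the holomorphy radius of $G_v$ improves from $\rho_0(\textup{\textbf{F}})$ to $\rho_0(\textup{\textbf{F}})/\theta^k$ at the $k$-th round, since the potential poles of $G_v$ in the enlarged domain sit at zeros of $Q_{|\textup{\textbf{m}}|}$ at which $G_v$ is already holomorphic), an entire combination $G_v=\sum v_\alpha F_\alpha$. I would then show that such a nontrivial syzygy among $F_1,\ldots,F_d$ contradicts the assumption that $Q_{n,\textup{\textbf{m}}}$ is uniquely determined for all sufficiently large $n$, forcing $\Psi$ to be injective and, since $\dim V=\sum\tau_j$, bijective. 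Gauss-elimination starting from the smallest $|\Phi(\xi_j)|$ then extracts, for each $j$ and each $t=1,\ldots,\tau_j$, a combination having a pole of exact order $t$ at $\xi_j$ and holomorphic in a neighborhood of $\overline{D}_{|\Phi(\xi_j)|}$ apart from $\xi_j$; this realizes $\xi_j$ as a system pole of order at least $\tau_j$.

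Combining both inequalities, $Q_{|\textup{\textbf{m}}|}=Q_{\textup{\textbf{m}}}^{\textup{\textbf{F}}}$ and $\textup{\textbf{F}}$ has exactly $|\textup{\textbf{m}}|$ system poles counting multiplicities. The principal technical obstacle will be in the third step: translating the existence of a nontrivial entire polynomial combination into a bona fide failure of uniqueness of $Q_{n,\textup{\textbf{m}}}$ (as opposed to a mere asymptotic degeneracy), and organizing the Gauss-elimination so that the pole of exact order $t$ at $\xi_j$ survives the elimination of lower-modulus singularities while the remaining holomorphy meets the requirement of Definition \ref{systempoleofordernew}.
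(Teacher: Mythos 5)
Your first step (transferring $[Q_{n,\textup{\textbf{m}}}G]_n=0$ to the limit polynomial to get $\rho_0(Q_{|\textup{\textbf{m}}|}G)\geq \rho_0(G)/\theta$, hence $Q_{\textup{\textbf{m}}}^{\textup{\textbf{F}}}\mid Q_{|\textup{\textbf{m}}|}$) is sound and matches computations the paper actually performs. The converse direction, however, has a genuine gap: the principal-part map $\Psi$ is \emph{not} well-defined by the first step. That step only yields meromorphy of $G_v$ in $D_{\rho_0(G_v)/\theta}$, and if you try to push the contour further you pick up residues at the poles of $G_v$ sitting on or near $\partial D_{\rho_0(G_v)}$; these residues contribute terms of order $(\theta/|\Phi(\xi_j)|)^n$, so the estimate saturates at $\theta/\rho_0(G_v)$ and the domain of meromorphy does not grow past $\rho_0(G_v)/\theta$ for a generic $v$. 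Consequently you cannot speak of the principal parts of $G_v$ at the outer zeros $\xi_j$ of $Q_{|\textup{\textbf{m}}|}$ until the inner poles have already been removed by linear combinations. This is exactly why the paper's proof is organized in layers: it isolates the poles on the boundary of the current domain of joint holomorphy, solves a homogeneous linear system (as in \eqref{eq21}) to eliminate them, applies Lemma \ref{mainlemma1} to the resulting functions $\textup{\textbf{g}}_k$, and only then sees the next layer. Your ``one-shot'' $\Psi$ followed by Gauss elimination cannot be set up globally; it has to be built recursively, and the rank count ($n_k^*=n_k$ for every layer) is what replaces your injectivity claim.

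The second gap is the one you yourself flag but do not close: a vector $v$ in the kernel produces, at best, an \emph{entire} combination $G_v=\sum v_\alpha F_\alpha$, and an entire $G_v$ with $[Q_{n,\textup{\textbf{m}}}G_v]_n=0$ is not by itself in conflict with uniqueness of $Q_{n,\textup{\textbf{m}}}$ (a single linear constraint imposed by an entire function is harmless). What produces the contradiction is that $G_v$ must be a \emph{polynomial}, violating polynomial independence (Definition \ref{polydef}), which in turn is forced by uniqueness plus $\deg Q_{n,\textup{\textbf{m}}}=|\textup{\textbf{m}}|$. The bridge from ``infinite radius of meromorphy'' to ``polynomial'' is the content of the recursion in Lemma \ref{mainlemma1}: the relation \eqref{almostdone} expresses each Faber coefficient $a_{n-m}$ in terms of a tail $\sum_{k>n-m}$, and Lemma \ref{trick} then forces the coefficients to vanish identically from some index on. Without this ingredient (or an equivalent), your kernel argument does not terminate, so injectivity of $\Psi$ --- and with it the claim that every zero of $Q_{|\textup{\textbf{m}}|}$ is a system pole of full order --- remains unproved.
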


An immediate consequence of Theorems \ref{thm1.4} and \ref{inverse} is the following corollary which is the analogue of the Montessus de Ballore-Gonchar theorem for simultaneous Pad\'{e}-Faber approximation.

\begin{corollary} Let $\textup{\textbf{F}}=(F_1,F_2,\ldots,F_d)\in \mathcal{H}(E)^d$ and $\textup{\textbf{m}}\in \mathbb{N}^d$ be a fixed multi-index.  Then, the following assertions are equivalent:
\begin{enumerate}
\item [(a)] $\textup{\textbf{F}}$ has exactly $|\textup{\textbf{m}}|$ system poles with respect to $\textup{\textbf{m}}$ counting multiplicities.
\item [(b)] The polynomials $Q_{n,\textup{\textbf{m}}}$ of $\textup{\textbf{F}}$ are uniquely determined for all sufficiently large $n$ and there exists a polynomial $Q_{|\textup{\textbf{m}}|}$ of degree $|\textup{\textbf{m}}|$ such that
$$\limsup_{n \rightarrow \infty} \|Q_{n,\textup{\textbf{m}}}-Q_{|\textup{\textbf{m}}|}\|^{1/n}=\theta<1.$$
\end{enumerate}
Consequently, if either (a) or (b) takes place, then $Q_{|\textup{\textbf{m}}|}=Q_{\textup{\textbf{m}}}^{\textup{\textbf{F}}},$ and \eqref{2.5}-\eqref{approximation} hold.
\end{corollary}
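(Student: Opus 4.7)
The corollary is a direct combination of Theorem~\ref{thm1.4} and Theorem~\ref{inverse}, so the plan is simply to chain these two results and to verify one quantitative subtlety, namely that the rate produced by the direct theorem is strictly less than $1$. I would split the argument into the two implications (a)$\Rightarrow$(b) and (b)$\Rightarrow$(a), and then derive the final assertions \eqref{2.5}--\eqref{approximation} from a second application of the direct theorem.

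For (a)$\Rightarrow$(b), I would apply Theorem~\ref{thm1.4} at once. Under (a), the theorem guarantees uniqueness of $Q_{n,\textup{\textbf{m}}}$ for all sufficiently large $n$ and provides the asymptotic formula \eqref{2.5}. Setting $Q_{|\textup{\textbf{m}}|}:=Q_{\textup{\textbf{m}}}^{\textup{\textbf{F}}}$, which is monic of degree $|\textup{\textbf{m}}|$ because $\textup{\textbf{F}}$ has exactly $|\textup{\textbf{m}}|$ system poles counting multiplicities, the only remaining point is to show that
$$\theta:=\max\left\{\frac{|\Phi(\xi)|}{\boldsymbol\rho_{\xi}(\textup{\textbf{F}},\textup{\textbf{m}})}:\xi\in\mathcal{P}(\textup{\textbf{F}},\textup{\textbf{m}})\right\}<1.$$
This is where I would spend the small amount of real work. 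By Definition~\ref{systempoleofordernew}, for each $\xi\in\mathcal{P}(\textup{\textbf{F}},\textup{\textbf{m}})$ of order $\tau$ and each $t=1,\ldots,\tau$ there is a polynomial combination $G$ of the form \eqref{polycom} which is holomorphic on a neighborhood of $\overline{D}_{|\Phi(\xi)|}$ except for a pole of order $t$ at $\xi$. Consequently $G$ remains meromorphic with that single pole in a canonical domain of index strictly larger than $|\Phi(\xi)|$, so $\rho_{t}(G)>|\Phi(\xi)|$; taking the largest such $\rho_{t}(G)$ gives $\rho_{\xi,t}(\textup{\textbf{F}},\textup{\textbf{m}})>|\Phi(\xi)|$, and minimizing over $t=1,\ldots,\tau$ yields $\boldsymbol\rho_{\xi}(\textup{\textbf{F}},\textup{\textbf{m}})>|\Phi(\xi)|$. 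Since $\mathcal{P}(\textup{\textbf{F}},\textup{\textbf{m}})$ is finite, the maximum defining $\theta$ is strictly less than $1$, so \eqref{2.5} becomes the geometric decay required in (b).

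For the converse (b)$\Rightarrow$(a), I would invoke Theorem~\ref{inverse} verbatim: its hypotheses coincide with (b), and its conclusion furnishes (a) together with the identification $Q_{|\textup{\textbf{m}}|}=Q_{\textup{\textbf{m}}}^{\textup{\textbf{F}}}$. Having established (a) in both directions of the equivalence, a second invocation of Theorem~\ref{thm1.4} delivers \eqref{2.5} and \eqref{approximation}, completing the corollary. I do not anticipate a genuine obstacle here, as the proof is essentially bookkeeping on top of the two main theorems; the only non-tautological step is the strict inequality $\theta<1$, which rests on the very definition of system pole and the associated quantities $\boldsymbol\rho_{\xi}(\textup{\textbf{F}},\textup{\textbf{m}})$.
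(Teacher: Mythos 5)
Your proposal is correct and takes essentially the same route as the paper, which states the corollary as an immediate consequence of Theorems \ref{thm1.4} and \ref{inverse} without further argument. The one quantitative point you verify explicitly --- that $\theta<1$ because each system pole $\xi$ satisfies $\boldsymbol\rho_{\xi}(\textup{\textbf{F}},\textup{\textbf{m}})>|\Phi(\xi)|>1$ by Definition \ref{systempoleofordernew} --- is exactly the right (and only) detail that needs checking.
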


The outline of this paper is as follows. Section 2 contains the proof of Theorem \ref{thm1.4}. The proof of Theorem \ref{inverse} is in Section 3.

\section{Proof of Theorem \ref{thm1.4}}

\subsection{Auxiliary Lemmas}

The following lemma (see, e.g.,  \cite{SmirnovLebedev} or \cite{Suetin}) is obtained using \eqref{conv} the same way as similar statements are proved for Taylor series.
\begin{lemma}\label{expan} Let $G\in \mathcal{H}(E)$. Then,
\begin{equation}\label{defofrhomF}
\rho_0(G)=\left(\limsup_{n \rightarrow \infty} |[G]_n|^{1/n} \right)^{-1}.
\end{equation}
Moreover, $\sum_{n=0}^{\infty} [G]_n \Phi_n(z)$ converges to $G(z)$ uniformly inside ${D}_{\rho_{0}(G)}.$ \end{lemma}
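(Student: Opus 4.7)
My plan is to set $\rho^{*}:=\bigl(\limsup_{n\to\infty}|[G]_n|^{1/n}\bigr)^{-1}$ and establish $\rho^{*}=\rho_{0}(G)$ simultaneously with the uniform convergence statement, adapting the usual Taylor-series argument to the Faber setting via \eqref{conv}.

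First I would establish $\rho^{*}\ge\rho_{0}(G)$. For arbitrary $\rho\in(1,\rho_{0}(G))$, $G$ extends holomorphically across $\Gamma_{\rho}$, so the integral defining $[G]_n$ may be evaluated on $\Gamma_{\rho}$; a standard ML-type estimate yields $|[G]_n|\le C(\rho)/\rho^{n}$, whence $\limsup|[G]_n|^{1/n}\le 1/\rho$, and letting $\rho\uparrow\rho_{0}(G)$ gives the claim. Next I would show geometric uniform convergence of $\sum [G]_n\Phi_n$ on compact subsets of $D_{\rho^{*}}$. Given compact $K\subset D_{\rho^{*}}$, choose $1<\sigma<\tau<\rho^{*}$ with $K\subset\overline{D}_{\sigma}$. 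Since $\Phi_n$ is a polynomial and $D_{\sigma}$ is bounded, the maximum principle gives $\|\Phi_n\|_K\le\|\Phi_n\|_{\Gamma_{\sigma}}$, and \eqref{conv} applied on the compact $\Gamma_{\sigma}\subset\mathbb{C}\setminus E$ yields $\|\Phi_n\|_K^{1/n}\le\sigma+o(1)$. Together with $|[G]_n|^{1/n}\le 1/\tau$ for $n$ large, the series is majorized by a geometric series of ratio $<1$, so it defines a holomorphic function $\tilde G$ on $D_{\rho^{*}}$.

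To close the argument I would identify $\tilde G$ with $G$ on $D_{\rho_{0}(G)}$. The crucial ingredient is the kernel identity
\[
\frac{1}{t-z}=\sum_{n=0}^{\infty}\frac{\Phi_n(z)\,\Phi'(t)}{\Phi^{n+1}(t)},\qquad z\in D_{\sigma},\ t\in\Gamma_{\sigma},
\]
which I would derive via the substitution $w=\Phi(t)$ with inverse $\Psi$: the definition \eqref{polynomial} realizes $\Phi_n(z)$ as the $n$-th negative Laurent coefficient at $\infty$ of $\Psi'(w)/(\Psi(w)-z)$, and that Laurent expansion converges for $|w|>\max(1,|\Phi(z)|)$, hence uniformly in $t$ on $\Gamma_{\sigma}$ for $z\in D_{\sigma}$. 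Inserting the identity into Cauchy's formula $G(z)=\frac{1}{2\pi i}\int_{\Gamma_{\sigma}}G(t)/(t-z)\,dt$ (valid for $1<\sigma<\rho_{0}(G)$ and $z\in D_{\sigma}$) and swapping summation with the $t$-uniformly convergent series produces $G(z)=\sum[G]_n\Phi_n(z)$ throughout $D_{\rho_{0}(G)}$. Hence $\tilde G$ provides a holomorphic extension of $G$ to $D_{\rho^{*}}$, which forces $\rho^{*}\le\rho_{0}(G)$, and both conclusions of the lemma follow. The sole non-routine step is the kernel identity, which I anticipate as the main technical point, although it is a direct consequence of \eqref{polynomial} once one reads the right-hand side as Laurent-coefficient extraction on the uniformizer $w$.
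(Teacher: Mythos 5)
Your argument is correct and is exactly the classical proof the paper alludes to: the paper gives no details, simply citing Smirnov--Lebedev and Suetin and remarking that the lemma follows from \eqref{conv} ``the same way as similar statements are proved for Taylor series,'' which is precisely your contour-shift estimate for the upper bound on $\limsup|[G]_n|^{1/n}$, geometric majorization via \eqref{conv} (or Lemma \ref{estimate}) for convergence, and the Faber generating-function identity inserted into Cauchy's formula for the identification. No gaps; the kernel identity you single out is indeed the only nontrivial ingredient and your derivation of it from \eqref{polynomial} via the uniformizer $w=\Phi(t)$ is sound.
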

Here and in what follows, the phrase ``uniformly inside a domain" means ``uniformly on each compact subset of the domain".

\medskip

As a consequence of Lemma \ref{expan},
if  $\textup{\textbf{F}}=(F_1,F_2,\ldots,F_d)\in \mathcal{H}(E)^d$, then for each $\alpha=1,2,\ldots,d$ and $k=0,1,\ldots,m_{\alpha}-1$ fixed,
\begin{equation}\label{usethisasdef}
z^{k}Q_{n,\textup{\textbf{m}}}(z)F_{\alpha}(z)-P_{n,\textup{\textbf{m}},k,\alpha}(z)=\sum_{\ell=n+1}^{\infty} [z^{k} Q_{n,\textup{\textbf{m}}}F_{\alpha}]_{\ell}\Phi_{\ell}(z), \quad \quad z\in D_{\rho_{0}(F_{\alpha})},
\end{equation}
and $P_{n,\textup{\textbf{m}},k,\alpha}=\sum_{\ell=0}^{n-1} [ z^{k} Q_{n,\textup{\textbf{m}}} F_{\alpha}]_{\ell} \Phi_\ell$ is uniquely determined by $Q_{n,\textup{\textbf{m}}}.$

\medskip

The next lemma (see \cite[p. 583]{Curtiss} or \cite[p. 43]{Suetin} for its proof) gives an estimate of Faber polynomials $\Phi_n$ on a level curve.
\begin{lemma}\label{estimate}
Let $\rho>1$ be fixed.
Then,  there exists $c>0$ such that
\begin{equation}\label{estimateFaber}
\|\Phi_n\|_{\Gamma_{\rho}}\leq c \rho^n, \qquad n\geq 0.
\end{equation}
\end{lemma}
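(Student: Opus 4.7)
The plan is to write $\Phi_n(z)$ as a sum of $\Phi(z)^n$ (the dominant term) plus an analytic correction that is smaller than $\rho^n$ on $\Gamma_\rho$. The key observation is that inside the exterior region $\mathbb{C}\setminus E$ the integrand appearing in \eqref{polynomial} is meromorphic, so we are free to deform the contour past the point $z\in\Gamma_\rho$ and pick up a residue.

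Fix $\rho>1$ and choose $\rho_1$ and $R$ with $1<\rho_1<\rho<R$. For any $z\in\Gamma_\rho$, the function $t\mapsto \Phi^n(t)/(t-z)$ is meromorphic in a neighborhood of the closed annular region $\{t:\rho_1\le|\Phi(t)|\le R\}$, with only a simple pole at $t=z$ and residue $\Phi^n(z)$. Applying Cauchy's theorem to this annulus and recalling \eqref{polynomial} yields
$$
\Phi_n(z)=\frac{1}{2\pi i}\int_{\Gamma_R}\frac{\Phi^n(t)}{t-z}\,dt
=\Phi^n(z)+\frac{1}{2\pi i}\int_{\Gamma_{\rho_1}}\frac{\Phi^n(t)}{t-z}\,dt.
$$

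The first term has modulus exactly $\rho^n$ when $z\in\Gamma_\rho$. For the second term, $|\Phi^n(t)|=\rho_1^n$ on $\Gamma_{\rho_1}$, and since $\Gamma_{\rho_1}$ and $\Gamma_\rho$ are disjoint compact sets, the distance $\delta:=\textup{dist}(\Gamma_\rho,\Gamma_{\rho_1})$ is positive. Consequently, writing $L$ for the arc length of $\Gamma_{\rho_1}$,
$$
\left|\frac{1}{2\pi i}\int_{\Gamma_{\rho_1}}\frac{\Phi^n(t)}{t-z}\,dt\right|
\le \frac{L}{2\pi\delta}\,\rho_1^n
\le \frac{L}{2\pi\delta}\,\rho^n.
$$
Combining these two bounds gives $\|\Phi_n\|_{\Gamma_\rho}\le c\,\rho^n$ with $c=1+L/(2\pi\delta)$ independent of $n$, which is \eqref{estimateFaber}.

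There is no real obstacle: the only subtlety is justifying the contour deformation, and this is immediate once one notes that $\Phi$ (hence $\Phi^n$) is holomorphic throughout $\overline{\mathbb{C}}\setminus E$ and that, for a fixed $z\in\Gamma_\rho$, the simple pole of the integrand at $t=z$ is the unique singularity between the two level curves $\Gamma_{\rho_1}$ and $\Gamma_R$. The uniformity in $z\in\Gamma_\rho$ comes from the fact that the separation $\delta$ depends only on $\rho$ and $\rho_1$, not on $z$.
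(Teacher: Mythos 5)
Your argument is correct and is essentially the standard one: the paper does not prove Lemma \ref{estimate} itself but cites Curtiss and Suetin, where the same contour deformation yields the identity $\Phi_n(z)=\Phi^n(z)+\frac{1}{2\pi i}\int_{\Gamma_{\rho_1}}\frac{\Phi^n(t)}{t-z}\,dt$ for $z$ exterior to $\Gamma_{\rho_1}$, and the bound follows exactly as you describe. No gaps; the level curves $\Gamma_{\rho_1}$ are analytic Jordan curves, so the arc length $L$ and the separation $\delta$ are finite and positive as you use them.
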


\subsection{Proof of Theorem \ref{thm1.4}}

\begin{proof}[Proof of Theorem \ref{thm1.4}]

For each $n\in \mathbb{N},$ let  $q_{n,\textup{\textbf{m}}}$ be the polynomial $Q_{n,\textup{\textbf{m}}}$ normalized so that
\begin{equation}\label{contradict}
q_{n,\textup{\textbf{m}}}(z)=\sum_{k=0}^{|\textup{\textbf{m}}|} \lambda_{n,k} z^{k}, \qquad \sum_{k=0}^{|\textup{\textbf{m}}|} |\lambda_{n,k}|=1.
\end{equation}
With this normalization, the polynomials $q_{n,\textup{\textbf{m}}}$ are uniformly bounded on each compact subset of $\mathbb{C}.$

\medskip

Let $\xi$ be a system pole of order $\tau$  of $\textup{\textbf{F}}$ with respect to $\textup{\textbf{m}}.$
We will show that
\begin{equation}\label{3.31}
\limsup_{n \rightarrow \infty} |q_{n,\textup{\textbf{m}}}^{(j)}(\xi)|^{1/n}\leq \frac{|\Phi(\xi)|}{\boldsymbol \rho_{\xi,j+1}(\textup{\textbf{F}},\textup{\textbf{m}})}, \quad \quad j=0,1,\ldots,\tau-1.
\end{equation} 

Fix $\ell \in \{1,\ldots,\tau\}$. Consider a polynomial combination of $G_\ell$ of the type \eqref{polycom} that is holomorphic on a neighborhood of $\overline{D}_{|\Phi(\xi)|}$ except for a pole of order $\ell$ at $z=\xi$ and verifies that $\rho_{\ell}(G_\ell)= \rho_{\xi,\ell}(\textup{\textbf{F}},\textup{\textbf{m}}).$ Then, we have
$$G_\ell=\sum_{\alpha=1}^d v_{\alpha,\ell} F_{\alpha}, \quad \quad \deg v_{\alpha,\ell}< m_{\alpha}, \quad \alpha=1,2,\ldots,d.$$
Set $$H_\ell(z):=(x-\xi)^{\ell} G_\ell(z) \quad \quad \textup{and}\quad \quad a_{n,n}^{(\ell)}:=[q_{n,\textup{\textbf{m}}}G_\ell]_n.$$ By the definition of $Q_{n,\textup{\textbf{m}}}$, it follows that
$a_{n,n}^{(\ell)}=0.$
Therefore,
 $$a_{n,n}^{(\ell)}=[q_{n,\textup{\textbf{m}}}G_\ell]_n=\frac{1}{2\pi i} \int_{\Gamma_{\rho_1}} \frac{q_{n,\textup{\textbf{m}}}(z)G_\ell(z) \Phi'(z)}{\Phi^{n+1}(z)}dz = 0,$$
where $1< \rho_1< |\Phi(\xi)|.$ Set
 $$\tau_{n,n}^{(\ell)}:=\frac{1}{2\pi i} \int_{\Gamma_{\rho_2}} \frac{q_{n,\textup{\textbf{m}}}(z)G_\ell(z) \Phi'(z)}{\Phi^{n+1}(z)}dz,$$
 where $|\Phi(\xi)|<\rho_2<\rho_{\xi,\ell}(\textup{\textbf{F}},\textup{\textbf{m}}).$ Using  Cauchy's residue theorem on the function $(q_{n,\textup{\textbf{m}}}G_\ell \Phi')/{\Phi^{n+1}}$, we obtain
$$
\tau_{n,n}^{(\ell)}=\tau_{n,n}^{(\ell)}-a_{n,n}^{(\ell)}=\frac{1}{2 \pi i}\int_{\Gamma_{\rho_2}}  \frac{q_{n,\textup{\textbf{m}}}(t)G_\ell(t) \Phi'(t)}{\Phi^{n+1}(t)}dt-\frac{1}{2 \pi i} \int_{\Gamma_{\rho_1}}\frac{q_{n,\textup{\textbf{m}}}(t)G_\ell(t) \Phi'(t)}{\Phi^{n+1}(t)} dt
$$
\begin{equation}\label{reduce}
= \textup{res}((q_{n,\textup{\textbf{m}}}G_\ell \Phi')/{\Phi^{n+1}},\, \xi).
\end{equation}
Now,
$$
\textup{res}((q_{n,\textup{\textbf{m}}}G_\ell \Phi')/{\Phi^{n+1}},\, \xi)=\frac{1}{(\ell-1)!} \lim_{z \rightarrow \xi} \left(\frac{(z-\xi)^{\ell} G_\ell(z)  \Phi'(z) q_{n,\textup{\textbf{m}}}(z)}{\Phi^{n+1}(z)}\right)^{(\ell-1)}
$$
\begin{equation}\label{reduce111111}
=\frac{1}{(\ell-1)!} \sum_{t=0}^{\ell-1} {\ell-1 \choose t }  \left(\frac{H_\ell \Phi'}{\Phi^{n+1}} \right)^{(\ell-1-t)}(\xi) q_{n,\textup{\textbf{m}}}^{(t)}(\xi).
\end{equation}
Consequently,
$$(\ell-1)!\tau_{n,n}^{(\ell)}= \left(\frac{H_\ell(\xi) \Phi'(\xi)}{\Phi^{n+1}(\xi)}\right) q_{n,\textup{\textbf{m}}}^{(\ell-1)}(\xi) + \sum_{t=0}^{\ell-2} {\ell-1 \choose t }  \left(\frac{H_\ell \Phi'}{\Phi^{n+1}} \right)^{(\ell-1-t)}(\xi) q_{n,\textup{\textbf{m}}}^{(t)}(\xi),
$$
where the sum is empty when $\ell = 1$. Therefore,
\begin{equation}\label{use678}
q_{n,\textup{\textbf{m}}}^{(\ell-1)}(\xi)=\frac{(\ell-1)!\tau_{n,n}^{(\ell)} \Phi^{n+1}(\xi)}{H_\ell(\xi)\Phi'(\xi) } -\sum_{t=0}^{\ell-2} {\ell-1 \choose t }  \left(\frac{{H_\ell \Phi'}}{{\Phi^{n+1}}} \right)^{(\ell-1-t)}(\xi)\frac{  \Phi^{n+1}(\xi) q_{n,\textup{\textbf{m}}}^{(t)}(\xi)}{H_\ell(\xi)\Phi'(\xi) }.
\end{equation}

Choose $\delta>0$  small enough so that
\begin{equation}\label{yuhyggt1*}
\rho_2:=\rho_{\xi,\ell}(\textup{\textbf{F}},\textup{\textbf{m}})-\delta>|\Phi(\xi)|.
\end{equation}
We have
\begin{equation}\label{use1y}
|\tau_{n,n}^{(\ell)}|=\left|\frac{1}{2\pi i} \int_{\Gamma_{\rho_2}} \frac{q_{n,\textup{\textbf{m}}}(z)G_\ell(z) \Phi'(z)}{\Phi^{n+1}(z)}dz\right|\leq \frac{c_1}{\rho_2^n}.
\end{equation}
If $\ell = 1$, from \eqref{use678} and \eqref{use1y} we obtain
$$|q_{n,\textup{\textbf{m}}}(\xi)|\leq c_2 \left( \frac{|\Phi(\xi)|}{\rho_2}\right)^n$$ which implies that
$$\limsup_{n \rightarrow \infty}|q_{n,\textup{\textbf{m}}}(\xi)|^{1/n} \leq \frac{|\Phi(\xi)|}{\rho_2}.$$
Letting $\delta \rightarrow 0,$ \eqref{3.31} readily follows for $j=0$. For the remaining values of $j,$ we use induction.

\medskip

Suppose that \eqref{3.31} is true for $j=0,\ldots,\ell -2, 2 \leq \ell \leq \tau$ and let us prove that it is also valid for $j=\ell-1$. Choosing $\delta > 0$ as in \eqref{yuhyggt1*}, for $t=0,1,\ldots,\ell-2,$ we obtain
\begin{equation}\label{use3y}
\left|\left(\frac{H_\ell \Phi'}{ \Phi^{n+1}}\right)^{(\ell-1-t)}(\xi)\right|=\left|\frac{(\ell-1-t)!}{2\pi i} \int_{|z-\xi|=\varepsilon}\frac{H_\ell(z) \Phi'(z)}{(z-\xi)^{\ell-t} \Phi^{n+1}(z)}dz\right|\leq \frac{c_3}{(|\Phi(\xi)|-\delta)^n},
\end{equation}
where $\{z\in \mathbb{C}: |z-\xi|=\varepsilon\}\subset \{z\in \mathbb{C}: |\Phi(z)|>|\Phi(\xi)|-\delta\}.$
Combining the induction hypothesis, \eqref{use678}, \eqref{use1y}, and \eqref{use3y}, it follows from \eqref{use678} that
$$\limsup_{n \rightarrow \infty}\left|(q_{n,\textup{\textbf{m}}})^{(\ell-1)}(\xi)\right|^{1/n}$$
$$=\limsup_{n \rightarrow \infty}\left|\frac{(\ell-1)!\tau_{n,n}^{(\ell)} \Phi^{n+1}(\xi)}{H_\ell(\xi)\Phi'(\xi) }-\sum_{t=0}^{\ell-2} {\ell-1 \choose t } \left(\frac{{H_\ell \Phi'}}{{\Phi^{n+1}}} \right)^{(\ell-1-t)}(\xi)\frac{ \Phi^{n+1}(\xi) (q_{n,\textup{\textbf{m}}})^{(t)}(\xi)}{H_\ell(\xi)\Phi'(\xi) }\right|^{1/n}$$
\begin{equation}\label{have}
 \leq  \max \left\{ \frac{|\Phi(\xi)|}{\rho_2}, \left( \frac{|\Phi(\xi)|}{|\Phi(\xi)|-\delta}\right)\left(\frac{|\Phi(\xi)|}{\boldsymbol \rho_{\xi,\ell-1}(\textup{\textbf{F}},\textup{\textbf{m}})}\right) \right\}.
\end{equation}
Letting $\delta \rightarrow 0,$ we have $\rho_2\rightarrow  \rho_{\xi,\ell}(\textup{\textbf{F}},\textup{\textbf{m}})$ and from \eqref{have}, we obtain
$$\limsup_{n \rightarrow \infty} \left|(q_{n,\textup{\textbf{m}}})^{(\ell-1)}(\xi)\right|^{1/n}\leq  \max \left\{\frac{|\Phi(\xi)|}{\rho_{\xi,\ell}(\textup{\textbf{F}},\textup{\textbf{m}})} , \frac{|\Phi(\xi)|}{ \boldsymbol \rho_{\xi,\ell-1}(\textup{\textbf{F}},\textup{\textbf{m}})} \right\} \leq \frac{|\Phi(\xi)|}{\boldsymbol \rho_{\xi,\ell}(\textup{\textbf{F}},\textup{\textbf{m}})}.
$$
which completes the induction.

\medskip

Let $\xi_1,\ldots,\xi_w$ be the distinct system poles of $\textup{\textbf{F}}$ and let $\tau_j$ be the order of $\xi_j$ as a
system pole, $j=1,\ldots,w.$ By assumption, $\tau_1+\ldots+\tau_w =|\textup{\textbf{m}}|.$ We have proved that for $j =1,\ldots,w$ and $t =0,1,\ldots,\tau_j-1,$
\begin{equation}\label{3.32}
\limsup_{n \rightarrow \infty} |q_{n,\textup{\textbf{m}}}^{(t)}(\xi_j)|^{1/n}\leq \frac{|\Phi(\xi_j)|}{\boldsymbol \rho_{\xi_j,t+1}(\textup{\textbf{F}},\textup{\textbf{m}})}\leq \frac{|\Phi(\xi_j)|}{\boldsymbol \rho_{\xi_j} (\textup{\textbf{F}},\textup{\textbf{m}})}.
\end{equation}

Let $L_{j,t}, j=1,\ldots,w, t =0,1,\ldots,\tau_j-1,$ be the basis of polynomials of degree $\leq |{\bf m}| -1$ defined by the interpolation conditions
\[L_{j,t}^{(s)}(\xi_k) = \delta_{j,k} \delta_{t,s}, \qquad k=1,\ldots,w, \qquad s = 0,\ldots,\tau_k-1.\]
Then
\[q_{n,{\bf m}} - \lambda_{n,|{\bf m}|} Q^{\bf F}_{{\bf m}} = \sum_{j=1}^{w}\sum_{t=0}^{\tau_j -1} q_{n,{\bf m}}^{(t)}(\xi_j)L_{j,t},\]
where $\lambda_{n,|{\bf m}|}$ is the leading coefficient of $q_{n,{\bf m}}$. From \eqref{3.32} it follows that
\[\limsup_{n \rightarrow \infty} \|q_{n,\textup{\textbf{m}}}-\lambda_{n,|{\bf m}|}  Q_{\textup{\textbf{m}}}^{{\textup{\textbf{F}}}}\|_K^{1/n}\leq \max \left\{\frac{|\Phi(\xi)|}{\boldsymbol\rho_{\xi}(\textup{\textbf{F}},\textup{\textbf{m}})} :\xi\in\mathcal{P}(\textup{\textbf{F}},\textup{\textbf{m}})\right\}\]
for every compact set $K\subset \mathbb{C}$.  In finite dimensional spaces all norms are equivalent; therefore, \begin{equation}\label{usenewone}
\limsup_{n \rightarrow \infty} \|q_{n,\textup{\textbf{m}}}-\lambda_{n,|{\bf m}|} Q_{\textup{\textbf{m}}}^{{\textup{\textbf{F}}}}\|^{1/n}\leq \max \left\{\frac{|\Phi(\xi)|}{\boldsymbol\rho_{\xi}(\textup{\textbf{F}},\textup{\textbf{m}})} :\xi\in\mathcal{P}(\textup{\textbf{F}},\textup{\textbf{m}})\right\}.
\end{equation}
In turn, this implies that
\begin{equation}\label{need2}
\liminf_{n \rightarrow \infty} |\lambda_{n,|{\bf m}|} |>0,
\end{equation}
since otherwise for a subsequence of indices $\Lambda,$ we would have $\lim_{n\in \Lambda} \|q_{n,{\bf m}}\| = 0$ which contradicts the normalization imposed on the polynomials $Q_{n,{\bf m}}$ (see \eqref{contradict}). Combining \eqref{usenewone} and \eqref{need2}, we get \eqref{2.5} with $\leq$ in place of $=$.

\medskip

Now we know that $\deg Q_{n,\textup{\textbf{m}}}=|\textup{\textbf{m}}|, n \geq n_0,$ since these polynomials converge to a polynomial of degree $|{\bf m}|$. In turn, this implies that $Q_{n,{\bf m}}$ is uniquely determined for all sufficiently large $n$ because the difference of any two distinct monic polynomials satisfying Definition \ref{simu} with the same degree produces a new solution of degree strictly less than $|\textup{\textbf{m}}|$, but we have proved that any solution must have degree $|\textup{\textbf{m}}|$ for all sufficiently large $n.$  Definition \ref{simu} implies that $P_{n,{\bf m},0,\alpha}$ is determined uniquely through $Q_{n,{\bf m}}$; consequently $R_{n,{\bf m},\alpha}$ is uniquely determined for all large enough $n$.

\medskip

Now, we prove the equality in \eqref{2.5}. To the contrary, suppose that
\begin{equation}\label{less}
\limsup_{n \rightarrow \infty} \|Q_{n,\textup{\textbf{m}}}-Q_{\textup{\textbf{m}}}^{\textup{\textbf{F}}}\|^{1/n}=\theta<
\max \left\{\frac{|\Phi(\xi)|}{\rho_{\xi}(\textup{\textbf{F}},\textup{\textbf{m}})} :\xi\in\mathcal{P}(\textup{\textbf{F}},\textup{\textbf{m}})\right\}.
\end{equation}
Let $\zeta$ be a system pole of $\textup{\textbf{F}}$ such that
$$\frac{|\Phi(\zeta)|}{\boldsymbol \rho_{\zeta}(\textup{\textbf{F}},\textup{\textbf{m}})}=\max \left\{\frac{|\Phi(\xi)|}{\rho_{\xi}(\textup{\textbf{F}},\textup{\textbf{m}})} :\xi\in\mathcal{P}(\textup{\textbf{F}},\textup{\textbf{m}})\right\}.$$ Clearly, the inequality \eqref{less} implies that $\boldsymbol \rho_{\zeta}(\textup{\textbf{F}},\textup{\textbf{m}})<\infty.$

\medskip

Choose a polynomial combination
\begin{equation}\label{uhbjikml}
G=\sum_{\alpha=1}^d v_{\alpha} F_{\alpha},\quad \quad \deg v_{\alpha} < m_{\alpha},\quad\quad  \alpha=1,2,\ldots,d,
\end{equation}
that is holomorphic on a neighborhood of $\overline{D}_{|\Phi(\zeta)|}$ except for a pole of order $s$ at $z=\zeta$ with $\rho_s(G)=\boldsymbol \rho_{\zeta}(\textup{\textbf{F}},\textup{\textbf{m}}).$
Notice that $Q_{\textup{\textbf{m}}}^{\textup{\textbf{F}}}G$ must have a singularity on the boundary of $D_{\rho_s}(G)$ which implies
\begin{equation}\label{newimprove}
\limsup_{n \rightarrow \infty} |[Q_{\textup{\textbf{m}}}^{\textup{\textbf{F}}}G]_n|^{1/n}=\frac{1}{\boldsymbol \rho_{\zeta}(\textup{\textbf{F}},\textup{\textbf{m}})}.
\end{equation}
In fact, if $Q_{\textup{\textbf{m}}}^{\textup{\textbf{F}}}G$ had no singularity on the boundary of $D_{\rho_s}(G)$, then all singularities of $G$  on the boundary of $D_{\rho_s}(G)$ would be at most poles and their order as poles of $G$ would be smaller than their order as system poles of ${\textup{\textbf{F}}}$. In this case, we could find a different polynomial combination $G_1$ of type \eqref{uhbjikml} for which $\rho_s(G_1)>\rho_s(G)=\boldsymbol \rho_{\zeta}(\textup{\textbf{F}},\textup{\textbf{m}})$ which contradicts the definition of $\boldsymbol \rho_{\zeta}(\textup{\textbf{F}},\textup{\textbf{m}})$. Therefore, $Q_{\textup{\textbf{m}}}^{\textup{\textbf{F}}}G$ has a singularity on the the boundary of $D_{\rho_s}(G)$ and the equality \eqref{newimprove} holds.

\medskip

Choose $1<\rho<|\Phi(\zeta)|$.
Then, by the definition of $Q_{n,\textup{\textbf{m}}},$ \eqref{less}, and \eqref{newimprove},
$$\frac{1}{\boldsymbol \rho_{\zeta}(\textup{\textbf{F}},\textup{\textbf{m}})}=\limsup_{n \rightarrow \infty} |[Q_{\textup{\textbf{m}}}^{\textup{\textbf{F}}}G]_n|^{1/n}=\limsup_{n \rightarrow \infty} |[Q_{\textup{\textbf{m}}}^{\textup{\textbf{F}}}G-Q_{n,\textup{\textbf{m}}}G]_n|^{1/n}$$
$$=\limsup_{n \rightarrow \infty}\left|\frac{1}{2\pi i} \int_{\Gamma_{\rho}} \frac{(Q_{\textup{\textbf{m}}}^{\textup{\textbf{F}}}-Q_{n,\textup{\textbf{m}}})(z) G(z) \Phi'(z)}{\Phi^{n+1}(z)} dz \right|^{1/n}\leq \frac{\theta}{\rho}.$$
Letting  $\rho\rightarrow |\Phi(\zeta)|$ in the above inequality, we obtain the contradiction
$$\frac{1}{\boldsymbol \rho_{\zeta}(\textup{\textbf{F}},\textup{\textbf{m}})}\leq \frac{\theta}{|\Phi(\zeta)|}< \frac{|\Phi(\zeta)|/\boldsymbol \rho_{\zeta}(\textup{\textbf{F}},\textup{\textbf{m}})}{|\Phi(\zeta)|}=\frac{1}{\boldsymbol \rho_{\zeta}(\textup{\textbf{F}},\textup{\textbf{m}})}.$$  

\medskip

Let us prove the inequality \eqref{approximation}. Let $\alpha\in \{1,\ldots,d\}$ and $k\in \{0,1,\ldots,m_{\alpha}-1\}$ be fixed and let $\tilde{\xi}_1,\ldots,\tilde{\xi}_N$ be the poles of $z^{k}F_{\alpha}$ in $D_{\alpha}(\textup{\textbf{F}},\textup{\textbf{m}}).$ For each $j=1,\ldots,N,$ let $\hat{\tau}_j$ be the order of $\tilde{\xi}_j$ as a pole of $z^{k}F_{\alpha}$ and $\tilde{\tau}_j$ its order as a system pole of $\textbf{\textup{F}}$. Recall that by assumption, $\hat{\tau}_j \leq\tilde{\tau}_j.$ From equation \eqref{usethisasdef}, we have
$$ Q_{n,\textup{\textbf{m}}} z^{k}  F_{\alpha}-P_{n,\textup{\textbf{m}},k,\alpha}=\sum_{\ell=n+1}^{\infty} a_{\ell,n} \Phi_{\ell}.$$
Multiplying the above equality by $\omega(z):=\prod_{j=1}^N(z-\tilde{\xi}_j)^{\hat{\tau}_j}$ and expanding the result in terms of the Faber polynomial expansion, we obtain
$$\omega Q_{n,\textup{\textbf{m}}} z^{k}F_{\alpha}-\omega P_{n,\textup{\textbf{m}},k,\alpha}=\sum_{\ell=n+1}^{\infty} a_{\ell,n} \omega \Phi_{\ell}$$
\begin{equation}\label{mainmain3}
=\sum_{\nu=0}^{\infty} b_{\nu,n} \Phi_{\nu}=\sum_{\nu=0}^{n+|\textup{\textbf{m}}|} b_{\nu,n} \Phi_{\nu}+\sum_{\nu=n+|\textup{\textbf{m}}|+1}^{\infty} b_{\nu,n} \Phi_{\nu}
\end{equation}

Let $K$ be a compact subset of $D_{\alpha}^{*}{({\textup{\textbf{F}},\textup{\textbf{m}}})}\setminus \mathcal{P}(\textup{\textbf{F}},\textup{\textbf{m}})$ and set
\begin{equation}\label{tgbrdcvhjkl}
 \sigma:=\max\{\|\Phi\|_K,1\}
 \end{equation}
($\sigma=1$ when $K \subset E$). Choose $\delta>0$ so small that
\begin{equation}\label{yuhyggt1}
\rho_2:=\boldsymbol \rho_{\alpha}^{*}({\textup{\textbf{F}},\textup{\textbf{m}}})-\delta>\sigma.
\end{equation}

\medskip

Let us estimate $\sum_{\nu=n+|\textup{\textbf{m}}|+1}^{\infty} |b_{\nu,n}| |\Phi_{\nu}|$ on $\overline{D}_{\sigma}.$ For $\nu \geq n+ |\textup{\textbf{m}}|+1,$
$$b_{\nu,n}:=[\omega Q_{n,\textup{\textbf{m}}} z^{k}F_{\alpha}-\omega P_{n,\textup{\textbf{m}},k,\alpha}]_{\nu}=[\omega Q_{n,\textup{\textbf{m}}} z^{k}F_\alpha]_{\nu}$$
$$=\frac{1}{2\pi i}\int_{\Gamma_{\rho_2}} \frac{z^{k}\omega(z) Q_{n,\textup{\textbf{m}}}(z) F_{\alpha}(z)\Phi'(z)} {\Phi^{\nu+1}(z)} dz,$$
where  $1< \rho_2<  \boldsymbol \rho^{*}_{\alpha}(\textup{\textbf{F}},\textup{\textbf{m}}).$ By a computation similar to \eqref{use1y}, we obtain
\begin{equation}\label{secondtypeasym}
|b_{\nu,n}|\leq  \frac{c_{4}}{ \rho_2^\nu}.
\end{equation}

Combining \eqref{yuhyggt1}, \eqref{secondtypeasym}, and Lemma \ref{estimate}, we have for $z\in \overline{D}_{\sigma},$
$$\sum_{\nu=n+|\textup{\textbf{m}}|+1}^{\infty} |b_{\nu,n}| |\Phi_{\nu}(z)|\leq c_{5} \sum_{\nu=n+|\textup{\textbf{m}}|+1}^{\infty}\left( \frac{\sigma}{\rho_2}\right)^{\nu}=c_{6} \left( \frac{\sigma}{\rho_2}\right)^{n},$$ which implies that
$$\limsup_{n \rightarrow \infty}\left\|\sum_{\nu=n+|\textup{\textbf{m}}|+1}^{\infty} |b_{\nu,n}| |\Phi_{\nu}|\right\|_{\overline{D}_{\sigma}}^{1/n}\leq  \frac{\sigma}{\rho_2}.$$
Letting $\delta \rightarrow 0^{+},$ we have $\rho_2\rightarrow \boldsymbol \rho_{\alpha}^{*}(\textup{\textbf{F}},\textup{\textbf{m}})$ and
\begin{equation}\label{estimate1}
\limsup_{n \rightarrow \infty}\left\|\sum_{\nu=n+|\textup{\textbf{m}}|+1}^{\infty} |b_{\nu,n}| |\Phi_{\nu}|\right\|_{\overline{D}_{\sigma}}^{1/n}\leq  \frac{\sigma}{\boldsymbol \rho_{\alpha}^{*}(\textup{\textbf{F}},\textup{\textbf{m}})}.
\end{equation}

Now, we wish to estimate $\sum_{\nu=0}^{n+|\textup{\textbf{m}}|} |b_{\nu,n}| |\Phi_{\nu}|$ on $\overline{D}_{\sigma}.$ Notice that
$$b_{\nu,n}=\sum_{\ell=n+1}^{\infty} a_{\ell,n}[ \omega  \Phi_{\ell}]_{\nu}.$$ Therefore,   we need to estimate both
$|a_{\ell,n}|$ and $|[ \omega  \Phi_{\ell}]_{\nu}|$.

\medskip

First, we work on $|a_{\ell,n}|$.
Combining \eqref{3.32} and \eqref{need2}, it follows that for the system poles $\xi_1,\ldots,\xi_w$ of $\textup{\textbf{F}}$, if $\tau_j$ is the order (as a system pole) of $\xi_j,$ then
\begin{equation}\label{3.31111}
\limsup_{n \rightarrow \infty} |Q_{n,\textup{\textbf{m}}}^{(u)}(\xi_j)|^{1/n}\leq \frac{|\Phi(\xi_j)|}{\boldsymbol \rho_{\xi_j,u+1}(\textup{\textbf{F}},\textup{\textbf{m}})}, \quad \quad u=0,1,\ldots,\tau_j-1.
\end{equation}
We have
$$a_{\ell,n}=[Q_{n,\textup{\textbf{m}}} z^{k}F_{\alpha}]_{\ell}=\frac{1}{2\pi i} \int_{\Gamma_{\rho_1}} \frac{Q_{n,\textup{\textbf{m}}}(z) z^{k}F_{\alpha}(z) \Phi'(z)}{\Phi^{\ell+1}(z)} dz,$$
where $1< \rho_1< \rho_{0}(z^{k}F_{\alpha})$ and define
$$\tau_{\ell,n}:=\frac{1}{2\pi i} \int_{\Gamma_{\rho_2}} \frac{Q_{n,\textup{\textbf{m}}}(z) z^{k}F_{\alpha}(z) \Phi'(z)}{\Phi^{\ell+1}(z)} dz,$$
where $\max\{|\Phi(\tilde{\xi}_j)|:j=1,\ldots,N\}< \rho_2< \boldsymbol \rho_{\alpha}^{*}(\textup{\textbf{F}},\textup{\textbf{m}}).$
Arguing as in \eqref{reduce} and \eqref{reduce111111}, we obtain
$$
\tau_{\ell,n}-a_{\ell,n}=\sum_{j=1}^N\textup{res}(Q_{n,\textup{\textbf{m}}} z^{k} F_{\alpha} \Phi'/\Phi^{\ell+1},\, \tilde{\xi}_j)
$$
\begin{equation}\label{qwertyuiopsdfghjk}
=\sum_{j=1}^N \frac{1}{(\hat{\tau}_j-1)!} \sum_{u=0}^{\hat{\tau}_j-1} {\hat{\tau}_j-1 \choose u }  \left(\frac{(z-\tilde{\xi}_j)^{\hat{\tau}_j}z^{k}F_{\alpha}\Phi'}{\Phi^{\ell+1}}\right)^{(\hat{\tau}_j-1-u)}(\tilde{\xi}_j) Q_{n,\textup{\textbf{m}}}^{(u)}(\tilde{\xi}_j).
\end{equation}
Notice that $(z-\tilde{\xi}_j)^{\hat{\tau}_j}z^{k}F_{\alpha}$ is holomorphic at $\tilde{\xi}_j.$  Let $\delta>0$ be such that
$$
|\Phi(\tilde{\xi}_j)|-2\delta>1, \quad \quad\quad \quad  j=1,\ldots, N.
$$
Computations similar to \eqref{use1y} and \eqref{use3y} give us
\begin{equation}\label{useuseuse}
|\tau_{\ell,n}|\leq \frac{c_{7}}{\rho_2^\ell}\quad \quad \textup{and}\quad \quad \left|\left(\frac{(z-\tilde{\xi}_j)^{\hat{\tau}_j}z^{k}F_{\alpha}\Phi'}  {\Phi^{\ell+1}}\right)^{(\hat{\tau}_j-1-u)}(\tilde{\xi}_j)\right|\leq \frac{c_{8}}{(|\Phi(\tilde{\xi}_j)|-\delta)^\ell},
\end{equation}
respectively.
Take $\varepsilon>0.$
From \eqref{3.31111} it follows that for all $j=1,\dots,N,$
$$|Q_{n,\textup{\textbf{m}}}^{(u)}(\tilde{\xi}_j)|\leq c_{9} \left(\frac{|\Phi(\tilde{\xi}_j)|+\varepsilon}{\boldsymbol \rho_{\tilde{\xi}_j,\hat{\tau}_j}(\textup{\textbf{F}},\textup{\textbf{m}})+\varepsilon} \right)^n.$$
Using \eqref{qwertyuiopsdfghjk}, \eqref{useuseuse} and the previous inequalities, we obtain
$$
|a_{\ell,n}| \leq |\tau_{\ell,n}|+
$$
$$\sum_{j=1}^N \sum_{u=0}^{\hat{\tau}_j-1} \frac{1}{(\hat{\tau}_j-1)!}  {\hat{\tau}_j-1 \choose u }  \left|\left(\frac{(z-\tilde{\xi}_j)^{\hat{\tau}_j}z^{k}F_{\alpha} \Phi'} {\Phi^{\ell+1}}\right)^{(\hat{\tau}_j-1-u)}(\tilde{\xi}_j)\right| \left| Q_{n,\textup{\textbf{m}}}^{(u)}(\tilde{\xi}_j)\right|
$$
$$
\leq \frac{c_{7}}{\rho_2^\ell}+ c_{10}  \sum_{j=1}^N \frac{ (|\Phi(\tilde{\xi}_j)|+\varepsilon)^n}{(\boldsymbol \rho_{\tilde{\xi}_j,\hat{\tau}_j}(\textup{\textbf{F}},\textup{\textbf{m}})+\varepsilon)^n(|\Phi(\tilde{\xi}_j)|-\delta)^\ell}
$$
\begin{equation}\label{approxaaa}
\leq \frac{c_{7}}{\rho_2^\ell}+ \frac{c_{10}}{( \boldsymbol \rho_{\alpha}^{*}(\textup{\textbf{F}},\textup{\textbf{m}})+\varepsilon)^n} \sum_{j=1}^N \frac{ (|\Phi(\tilde{\xi}_j)|+\varepsilon)^n}{(|\Phi(\tilde{\xi}_j)|-\delta)^\ell}
\end{equation}

Next, we estimate $| [\omega \Phi_\ell]_{\nu}|.$ We can assume that $\rho_1-\delta>1$. By Lemma \ref{estimate},
\begin{equation}\label{banana100hougeqs}
| [\omega \Phi_\ell]_{\nu}| \leq \left| \frac{1}{2\pi i}\int_{\Gamma_{\rho_1-\delta}} \frac{\omega(z) \Phi_{\ell}(z) \Phi'(z)}{ \Phi^{\nu+1}(z)} dz  \right|
\leq c_{11} \frac{(\rho_1-\delta)^{\ell}}{(\rho_1-\delta)^{\nu}}.
\end{equation}

By \eqref{approxaaa} and \eqref{banana100hougeqs}, we have
$$
|b_{\nu,n}|\leq \sum_{\ell=n+1}^{\infty} |a_{\ell,n}|| [\omega \Phi_\ell]_{\nu}|
$$
\begin{equation}\label{banana100hougeqs1}
\leq \frac{c_{12}}{(\rho_1-\delta)^{\nu}} \left(\frac{\rho_1-\delta}{\rho_2}\right)^n+ \frac{c_{13} (\rho_1-\delta)^n}{( \boldsymbol \rho_{\alpha}^{*}(\textup{\textbf{F}},\textup{\textbf{m}})+\varepsilon)^n {(\rho_1-\delta)^{\nu}}}  \sum_{j=1}^N  \left(\frac{ |\Phi(\tilde{\xi}_j)|+\varepsilon}{|\Phi(\tilde{\xi}_j)|-\delta}\right)^n.
\end{equation}

Combining \eqref{banana100hougeqs1} and Lemma \ref{estimate},  for $z\in \overline{D}_{\sigma}$ we obtain
$$\sum_{\nu=0}^{n+|\textup{\textbf{m}}|} |b_{\nu,n}| |\Phi_{\nu}(z)| \leq
$$
$$
\left(c_{14} \left(\frac{\rho_1-\delta}{\rho_2}\right)^n+ \frac{c_{15} (\rho_1-\delta)^n}{( \boldsymbol \rho_{\alpha}^{*}(\textup{\textbf{F}},\textup{\textbf{m}})+\varepsilon)^n }  \sum_{j=1}^N  \left(\frac{ |\Phi(\tilde{\xi}_j)|+\varepsilon}{|\Phi(\tilde{\xi}_j)|-\delta}\right)^n\right) \sum_{\nu=0}^{n+|\textup{\textbf{m}}|} \left(\frac{\sigma}{\rho_1-\delta} \right)^{\nu} \leq
$$
$$
\left(c_{14} \left(\frac{\rho_1-\delta}{\rho_2}\right)^n+\frac{c_{15}  (\rho_1-\delta)^n}{( \boldsymbol \rho_{\alpha}^{*}(\textup{\textbf{F}},\textup{\textbf{m}})+\varepsilon)^n }  \sum_{j=1}^N  \left(\frac{ |\Phi(\tilde{\xi}_j)|+\varepsilon}{|\Phi(\tilde{\xi}_j)|-\delta}\right)^n\right)({n+|\textup{\textbf{m}}|+1})  \sigma^{n+|\textup{\textbf{m}}|}.
$$
This implies that
$$\limsup_{n \rightarrow \infty} \left\|\sum_{\nu=0}^{n+|\textup{\textbf{m}}|} |b_{\nu,n}| |\Phi_{\nu}| \right\|_{\overline{D}_{\sigma}}^{1/n} \leq $$
$$\max \left\{  \frac{\sigma(\rho_1 -\delta) }{\rho_2}, \frac{\sigma(\rho_1-\delta)}{\boldsymbol \rho_{\alpha}^{*}(\textup{\textbf{F}},\textup{\textbf{m}})+\varepsilon} \max_{j=1,\ldots,N} \left(\frac{|\Phi(\tilde{\xi}_j)|+\varepsilon}{|\Phi(\tilde{\xi}_j)|-\delta}\right) \right\}.$$
Letting $\varepsilon,\delta\rightarrow 0^{+},$ and $\rho_1 \rightarrow 1^{+}$, we have $\rho_2 \rightarrow \boldsymbol \rho_{\alpha}^{*}(\textup{\textbf{F}},\textup{\textbf{m}})$ and we obtain
\begin{equation}\label{mainmain1}
\limsup_{n \rightarrow \infty} \left\|\sum_{\nu=0}^{n+|\textup{\textbf{m}}|} |b_{\nu,n}| |\Phi_{\nu}| \right\|_{\overline{D}_{\sigma}}^{1/n}\leq \frac{\sigma}{ \boldsymbol \rho_{\alpha}^{*}(\textup{\textbf{F}},\textup{\textbf{m}})}.
\end{equation}
Using \eqref{2.5}, \eqref{mainmain3}, \eqref{estimate1}, and \eqref{mainmain1}, we obtain \eqref{approximation} and the proof is complete.
\end{proof}

\section{Proof of Theorem \ref{inverse}}

\subsection{Incomplete Pad\'{e}-Faber approximation}\label{subsection3.1}

Let us introduce the notion of incomplete Pad\'{e}-Faber approximation. Similar concepts proved to be effective in the study of Hermite-Pad\'e approximation and orthogonal Hermite-Pad\'e approximation, see \cite{CacoqYsernLopezIncom,BosuwanLopez}.

\begin{definition}\label{incomsimu}\textup{ Let $F\in \mathcal{H}(E).$  Fix $m \geq m^{*}\geq 1$ and $n \in \mathbb{N}.$  Then, there exist polynomials $Q_{n,m,m^{*}}$ and $P_{n,m,m^{*},k},$ $k=0,1,\ldots,m^{*}-1,$ such that
$$ \deg(P_{n,m,m^{*},k})\leq n-1, \quad \quad \deg(Q_{n,m,m^{*}})\leq m, \quad\quad  Q_{n,m,m^{*}}\not\equiv 0.$$
$$[Q_{n,m,m^{*}} z^{k} F-P_{n,m,m^{*},k}]_j=0, \quad \quad j=0,1,\ldots,n.$$
The rational function
$R_{n,m,m^{*}}:=P_{n,m,m^{*},0}/Q_{n,m,m^{*}}$
is called  an \emph{$(n,m,m^{*})$ incomplete Pad\'{e}-Faber approximant of $F$}.}
\end{definition}

Clearly, $$[z^{k}  Q_{n,m,m^{*}}  F]_n=0, \quad \quad k=0,1,\ldots,m^{*}-1$$
and $Q_{n,m,m^{*}}$ may not be unique. For each $m\geq  m^{*}\geq 1$ and $n \in \mathbb{N}$, we choose one candidate of $Q_{n,m,m^{*}}$. Since $Q_{n,m,m^{*}}\not\equiv 0,$ we normalize it to have leading coefficient equal to $1.$ We call $Q_{n,m,m^{*}}$ \emph{a denominator of an $(n,m,m^{*})$ incomplete Pad\'{e}-Faber approximant of $F$}. Notice that for each $\alpha=1,\ldots,d,$ $Q_{n,\textup{\textbf{m}}}$  (from \eqref{denosim}) is a denominator of an $(n,|\textup{\textbf{m}}|,m_{\alpha})$ incomplete Pad\'{e}-Faber approximant of $F_{\alpha}.$

\medskip
 
Let $D_{\rho_{m^*}(F)}$ be the largest canonical region in which $F$ can be extended as a meromorphic function having at most $m^*$ poles and $\rho_{m^*}(F)$ be the index of this region.

\medskip

\begin{lemma}\label{lemma4} Let $F\in \mathcal{H}(E).$  Fix $m\geq m^{*}\geq 1.$ Suppose that there exists a polynomial $Q_{m}$ of degree $m$ such that
\begin{equation}\label{3.3332}
 \lim_{n\to \infty} Q_{n,m,m^{*}}=Q_{m}.
\end{equation}
Then,   $\rho_{0}(Q_{m} F)\geq \rho_{m^{*}}(F).$
\end{lemma}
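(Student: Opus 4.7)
The plan is to show that $Q_m$ vanishes at each pole of $F$ in $D_{\rho_{m^{*}}(F)}$ to at least the multiplicity of that pole. Let $\xi_1,\ldots,\xi_N$ denote the distinct poles of $F$ in $D_{\rho_{m^{*}}(F)}$, with multiplicities $n_1,\ldots,n_N$, and put $s:=n_1+\cdots+n_N\leq m^{*}$. Set $W(z):=\prod_{j=1}^{N}(z-\xi_j)^{n_j}$, so that $WF$ extends holomorphically to $D_{\rho_{m^{*}}(F)}$. Once $W\mid Q_m$ is established, writing $Q_m=W\widetilde{Q}$ gives $Q_mF=\widetilde{Q}\cdot(WF)$, which is holomorphic in $D_{\rho_{m^{*}}(F)}$, yielding the desired $\rho_0(Q_mF)\geq\rho_{m^{*}}(F)$. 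Since $Q_{n,m,m^{*}}\to Q_m$ implies $Q_{n,m,m^{*}}^{(u)}(\xi_j)\to Q_m^{(u)}(\xi_j)$ for every $u$, the divisibility $W\mid Q_m$ reduces to proving
\[
Q_{n,m,m^{*}}^{(u)}(\xi_j)\longrightarrow 0,\qquad u=0,1,\ldots,n_j-1,\ \ j=1,\ldots,N.
\]

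To produce these limits I would exploit the orthogonality $[Q_{n,m,m^{*}}z^kF]_n=0$, $k=0,\ldots,m^{*}-1$, through carefully chosen polynomial multipliers. For each $j$ and each $t\in\{0,1,\ldots,n_j-1\}$, define
\[
p_{j,t}(z):=(z-\xi_j)^t\prod_{i\neq j}(z-\xi_i)^{n_i},
\]
a polynomial of degree $s-n_j+t\leq m^{*}-1$; by linearity $[Q_{n,m,m^{*}}\,p_{j,t}F]_n=0$. The key observation is that $p_{j,t}F$ has a unique pole in $D_{\rho_{m^{*}}(F)}$, located at $\xi_j$ and of order $n_j-t$. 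Expressing the vanishing Faber coefficient as a contour integral on $\Gamma_{\rho_1}$ with $1<\rho_1<\min_j|\Phi(\xi_j)|$, and deforming to $\Gamma_{\rho_2}$ with $\max_j|\Phi(\xi_j)|<\rho_2<\rho_{m^{*}}(F)$, the residue theorem collects only the residue at $\xi_j$; the residual contour integral on $\Gamma_{\rho_2}$ is $O(\rho_2^{-n})$ because $\{Q_{n,m,m^{*}}\}$ converges and is therefore uniformly bounded on $\Gamma_{\rho_2}$.

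A Leibniz expansion of the residue, parallel to \eqref{reduce111111}, writes it as $\Phi^{-(n+1)}(\xi_j)$ times a linear combination of $Q_{n,m,m^{*}}^{(u)}(\xi_j)$ for $u=0,\ldots,n_j-t-1$: the coefficient of the top derivative $u=n_j-t-1$ is a nonzero, $n$-independent constant (built from the leading Laurent coefficient of $p_{j,t}F$ at $\xi_j$ together with $\Phi'(\xi_j)$), while the coefficients of lower derivatives grow at most polynomially in $n$. A downward induction on $t$ from $n_j-1$ to $0$ (equivalently, an upward induction on the derivative order $u$) then isolates the top derivative at each step and yields
\[
|Q_{n,m,m^{*}}^{(u)}(\xi_j)|\leq C\,n^{u}\bigl(|\Phi(\xi_j)|/\rho_2\bigr)^{n}\longrightarrow 0,\qquad u=0,\ldots,n_j-1,
\]
since $|\Phi(\xi_j)|<\rho_2$. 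Letting $n\to\infty$ gives $W\mid Q_m$ and completes the proof. The main technical subtlety is the Leibniz bookkeeping at each induction step: one must check that the top-derivative coefficient is truly $n$-independent and nonzero so it can be isolated and inverted, and confirm that the lower-derivative contributions, although carrying polynomial-in-$n$ factors from derivatives of $\Phi^{-(n+1)}$, remain dominated by the exponentially small factor $(|\Phi(\xi_j)|/\rho_2)^n$ already provided by the induction hypothesis.
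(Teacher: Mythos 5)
Your proposal is correct and follows essentially the same route as the paper: the paper's proof consists precisely of showing $\limsup_n|Q_{n,m,m^*}^{(j)}(\xi)|^{1/n}\leq|\Phi(\xi)|/\rho_{m^*}(F)$ for each pole $\xi$ of $F$ in $D_{\rho_{m^*}(F)}$ by ``modifying conveniently'' the contour-deformation/residue/Leibniz induction used for \eqref{3.31}, and then invoking $Q_{n,m,m^*}\to Q_m$ to conclude that $Q_m$ vanishes at $\xi$ to the order of the pole. Your explicit multipliers $p_{j,t}$ of degree $\leq m^*-1$ are exactly the ``convenient modification'' the paper leaves implicit, and your quantitative bound $Cn^u(|\Phi(\xi_j)|/\rho_2)^n$ is an equivalent substitute for the paper's $\limsup^{1/n}$ estimate.
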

\begin{proof}  Let  $q_{n, m,m^*}$ be the polynomial $Q_{n, m,m^*}$ normalized so that
\begin{equation}\label{contradict1}
q_{n, m,m^*}(z)=\sum_{k=0}^{m} \lambda_{n,k} z^{k}, \qquad \sum_{k=0}^{m} |\lambda_{n,k}|=1.
\end{equation}
Let $\xi$ be a pole of order $\tau$  of $ F$ in $D_{\rho_{m^*}(F)}$. Modifying conveniently the proof of \eqref{3.31}, one can show that
\begin{equation}\label{3.31^*}
\limsup_{n \rightarrow \infty} |q_{n, m,m^*}^{(j)}(\xi)|^{1/n}\leq \frac{|\Phi(\xi)|}{\rho_{m^{*}}(F)}, \qquad  j=0,1,\ldots,\tau-1.
\end{equation}
Since the sequence of polynomials $Q_{n,m,m^*}$ converges to $Q_m$, \eqref{3.31^*} entails that $\zeta$ is a zero of $Q_m$ of multiplicity at least $\tau$. Being this the case for each pole of $F$ in $D_{\rho_{m^*}(F)},$ the thesis readily follows.  \end{proof}

The following technical lemma, whose proof may be found in \cite[Lemma 3]{bosuwaninverse}, is used for proving Lemma \ref{mainlemma1}.
\begin{lemma}\label{trick}
 If a sequence of complex numbers $\{A_N\}_{N \in \mathbb{N}}$ has the following properties:
\begin{enumerate}
\item[$(i)$] $\lim_{N \rightarrow \infty} |A_N|^{1/N}=0;$
\item[$(ii)$] there exists $N_0\in \mathbb{N}$ and $C>0$ such that $|A_N|\leq C \sum_{k=N+1}^{\infty} |A_k|,$ for all $N \geq N_0$,
\end{enumerate}
then there exists $N_1\in \mathbb{N}$ such that $A_N=0$ for all $N \geq N_1.$
\end{lemma}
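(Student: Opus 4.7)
My approach is to exploit the geometric decay from condition (i) against the recursive self-bound in condition (ii) by introducing the tail sum
$$S_N := \sum_{k=N}^{\infty} |A_k|,$$
which is finite for every $N$ because condition (i) guarantees $|A_k| \le \varepsilon^k$ eventually for any $\varepsilon \in (0,1)$. The plan is to show $S_N = 0$ for all sufficiently large $N$, which immediately forces $A_N = 0$ for all such $N$.

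First I would rewrite condition (ii) in terms of $S_N$. Since $|A_N| = S_N - S_{N+1}$, the inequality in (ii) reads $S_N - S_{N+1} \le C\, S_{N+1}$, i.e.
$$S_N \le (1+C)\, S_{N+1}, \qquad N \ge N_0.$$
Iterating this for $k$ steps gives $S_N \le (1+C)^{k}\, S_{N+k}$ for every $k \ge 0$ and every $N \ge N_0$. The idea is that this backward-propagating inequality is incompatible with any genuine decay of $S_{N+k}$.

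Next I would fix $\varepsilon \in (0,1)$ small enough that $(1+C)\varepsilon < 1$. By condition (i) there exists $N_\varepsilon$ with $|A_k| \le \varepsilon^k$ for all $k \ge N_\varepsilon$, so for every $M \ge N_\varepsilon$,
$$S_M \le \sum_{k=M}^{\infty} \varepsilon^k = \frac{\varepsilon^{M}}{1-\varepsilon}.$$
Set $N_1 := \max(N_0, N_\varepsilon)$. For $N \ge N_1$ and any $k \ge 0$, combining the two bounds yields
$$S_N \le (1+C)^{k}\, S_{N+k} \le \frac{\varepsilon^{N}}{1-\varepsilon}\,\bigl((1+C)\varepsilon\bigr)^{k}.$$
Letting $k \to \infty$, the right side tends to $0$ because $(1+C)\varepsilon < 1$, so $S_N = 0$. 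Consequently $A_N = 0$ for all $N \ge N_1$, as required.

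The argument is essentially bookkeeping; the only genuine idea is the choice $\varepsilon < 1/(1+C)$, which ensures that the amplification factor $(1+C)^k$ from iterating (ii) is dominated by the geometric decay $\varepsilon^{k}$ coming from (i). No part of the proof poses a real obstacle; the main thing to verify carefully is that the inequality $S_N \le (1+C)\, S_{N+1}$ propagates unchanged from the single index $N$ hypothesis in (ii), which it does because (ii) is asserted for all $N \ge N_0$.
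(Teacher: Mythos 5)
Your proof is correct: the reduction of (ii) to $S_N\le(1+C)S_{N+1}$, the iteration, and the comparison with the superexponential bound $S_{N+k}\le\varepsilon^{N+k}/(1-\varepsilon)$ for $\varepsilon<1/(1+C)$ all hold, and together they force $S_N=0$ for $N\ge N_1$. The paper does not reproduce a proof here (it cites Lemma 3 of \cite{bosuwaninverse}), but the argument given there is essentially this same tail-sum recursion played off against condition $(i)$, only phrased as a contradiction, so your approach matches the intended one.
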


Lemma \ref{mainlemma1} below is the cornerstone for the proof of  Theorem \ref{inverse}.

\begin{lemma}\label{mainlemma1} Let $F\in \mathcal{H}(E).$  Fix $m\geq m^{*}\geq 1.$  Suppose that $F$ is not a rational function with at most $m^{*}-1$ poles and there exists a polynomial $Q_{m}$ of degree $m$ such that
\begin{equation}\label{3.33321}
\limsup_{n \rightarrow \infty} \|Q_{n,m,m^{*}}-Q_{m}\|^{1/n}=\theta<1.
\end{equation}
Then, the poles of $F$ in $D_{\rho_{m^{*}}(F)}$ are zeros of $Q_{m}$ counting multiplicities and, either $F$ has exactly $m^{*}$ poles in $D_{\rho_{m^{*}}(F)}$ or $\rho_0(Q_{m} F)>\rho_{m^{*}}(F).$
\end{lemma}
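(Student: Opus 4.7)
My plan is to handle the two conclusions of the lemma separately, leaning on Lemma \ref{lemma4} for the first and on a polynomial-multiplier trick (reducing the problem from $F$ to a holomorphic surrogate $G$) for the second.

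\textbf{First conclusion.} The hypothesis $\limsup\|Q_{n,m,m^{*}}-Q_{m}\|^{1/n}=\theta<1$ implies in particular that $Q_{n,m,m^{*}}\to Q_{m}$ in the coefficient norm, so Lemma \ref{lemma4} applies and yields $\rho_{0}(Q_{m}F)\geq \rho_{m^{*}}(F)$. In its proof one actually checks (via the analogue of \eqref{3.31^*}) that each pole $\xi$ of $F$ in $D_{\rho_{m^{*}}(F)}$ of order $\tau$ is a zero of $Q_{m}$ of multiplicity at least $\tau$, which is exactly the first assertion.

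\textbf{Second conclusion (the dichotomy).} Suppose $F$ has exactly $k^{*}<m^{*}$ poles in $D_{\rho_{m^{*}}(F)}$ counting multiplicities, and let $\omega$ be the monic polynomial whose zeros are those poles with their correct orders; then $\deg\omega=k^{*}\leq m^{*}-1$. By the first conclusion, $\omega$ divides $Q_{m}$, so I may write $Q_{m}=\omega\tilde{Q}$. The function $G:=\omega F$ is holomorphic in $D_{\rho_{m^{*}}(F)}$, so $\rho_{0}(G)\geq \rho_{m^{*}}(F)$. Next, since the defining relations $[z^{k}Q_{n,m,m^{*}}F]_{n}=0$, $k=0,\ldots,m^{*}-1$, depend linearly on the polynomial multiplier, taking the combination corresponding to $\omega$ gives
\[
[Q_{n,m,m^{*}}G]_{n}=[\omega Q_{n,m,m^{*}}F]_{n}=0,
\]
hence $[Q_{m}G]_{n}=[(Q_{m}-Q_{n,m,m^{*}})G]_{n}$. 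Estimating the integral representation of this Faber coefficient on a contour $\Gamma_{\rho}$ with $1<\rho<\rho_{0}(G)$ (where the integrand is holomorphic) I get $|[Q_{m}G]_{n}|\leq C(\rho)\,\|Q_{m}-Q_{n,m,m^{*}}\|\,\rho^{-n}$, and letting $\rho\to \rho_{0}(G)^{-}$ together with the geometric convergence hypothesis,
\[
\limsup_{n\to\infty}|[Q_{m}G]_{n}|^{1/n}\leq \frac{\theta}{\rho_{0}(G)}\leq \frac{\theta}{\rho_{m^{*}}(F)}<\frac{1}{\rho_{m^{*}}(F)}.
\]
Lemma \ref{expan} applied to $Q_{m}G$ then yields $\rho_{0}(Q_{m}G)\geq \rho_{m^{*}}(F)/\theta>\rho_{m^{*}}(F)$.

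\textbf{Transferring back to $Q_{m}F$.} Since $Q_{m}G=\omega\,Q_{m}F$ is holomorphic throughout $D_{\rho_{0}(Q_{m}G)}$ and the first conclusion forces $Q_{m}F=\tilde{Q}G$ to be holomorphic at every zero of $\omega$ (they are cancelled inside $D_{\rho_{m^{*}}(F)}$), the quotient $Q_{m}F$ is holomorphic in all of $D_{\rho_{0}(Q_{m}G)}$, giving $\rho_{0}(Q_{m}F)\geq \rho_{0}(Q_{m}G)>\rho_{m^{*}}(F)$, as required.

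\textbf{Main obstacle.} The hard part is extracting the strict inequality $\rho_{0}(Q_{m}F)>\rho_{m^{*}}(F)$ from the rate $\theta$. A direct attempt to bound $|[Q_{m}F]_{n}|$ via contour deformation runs into the poles of $F$ inside $D_{\rho_{m^{*}}(F)}$; the resulting residues are only controlled by $\|Q_{m}-Q_{n,m,m^{*}}\|^{1/n}\to \theta$ combined with the pole locations $|\Phi(\xi_{j})|<\rho_{m^{*}}(F)$, which yields merely the rate $\theta/|\Phi(\xi_{j})|$, potentially larger than $1/\rho_{m^{*}}(F)$. The trick is to introduce $G=\omega F$: the bound $\deg\omega\leq m^{*}-1$ (which uses the strict inequality $k^{*}<m^{*}$) allows $\omega$ to serve as a legitimate test multiplier in the Padé-Faber relations, and working with $G$ instead of $F$ lets the contour be pushed all the way to $\Gamma_{\rho_{m^{*}}(F)}$ without crossing any singularities. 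The non-triviality hypothesis that $F$ is not rational with at most $m^{*}-1$ poles is precisely what guarantees $\rho_{m^{*}}(F)<\infty$, so the conclusion has content.
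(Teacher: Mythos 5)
Your first conclusion and the main line of your dichotomy argument are correct and essentially reproduce the paper's proof: your $\omega$ plays the role of the paper's polynomial $q_{m^*}$, the identity $[\omega Q_{n,m,m^{*}}F]_{n}=0$ (valid because $\deg\omega\leq m^{*}-1$) is the same device, and the contour estimate giving $\limsup_{n}|[Q_{m}\omega F]_{n}|^{1/n}\leq\theta/\rho_{0}(\omega F)$ is the same computation. (The paper phrases it as the contradiction $1/\rho_{m^{*}}(F)\leq\theta/\rho_{m^{*}}(F)$ under the assumption $\rho_{0}(Q_{m}F)=\rho_{m^{*}}(F)$, whereas you establish the second horn of the dichotomy directly; that is only a cosmetic difference.) The transfer from $Q_{m}G$ back to $Q_{m}F$ is also fine.

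The gap is in your last paragraph. The hypothesis that $F$ is not a rational function with at most $m^{*}-1$ poles does \emph{not} guarantee $\rho_{m^{*}}(F)<\infty$: take $F(z)=e^{z}$, which is transcendental yet has $\rho_{m^{*}}(F)=\infty$ for every $m^{*}$. Your estimate, read in that situation, only forces $\rho_{m^{*}}(F)=\rho_{0}(\omega F)=\infty$ rather than excluding it, and then the conclusion $\rho_{0}(Q_{m}F)>\rho_{m^{*}}(F)$ is unattainable, so the only way out is to contradict the non-rationality of $F$ --- which your argument never does. This is exactly where the paper does real work: writing $\omega F=\sum_{k}a_{k}\Phi_{k}$ and expanding $0=[Q_{n,m,m^{*}}\omega F]_{n}$ using \eqref{capa} and the uniform boundedness of the coefficients of $Q_{n,m,m^{*}}$, one obtains $|a_{n-m}|\rho^{n-m}\leq c\sum_{k>n-m}|a_{k}|\rho^{k}$; since $\rho_{0}(\omega F)=\infty$ makes $|a_{k}\rho^{k}|^{1/k}\to 0$, Lemma \ref{trick} forces $a_{k}=0$ for all large $k$, so $\omega F$ is a polynomial and $F$ is rational with at most $m^{*}-1$ poles, the desired contradiction. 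Your proposal is missing this entire branch, and it is the only place where the non-rationality hypothesis is genuinely used.
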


\begin{proof} From Lemma \ref{lemma4}, we know that the poles of $F$ in $D_{\rho_{m^{*}}(F)}$ are zeros of $Q_{m}$ counting multiplicities and $\rho_{0}(Q_{m} F)\geq \rho_{m^{*}}(F).$ Assume that $\rho_{0}(Q_{m} F)=\rho_{m^{*}}(F).$ Let us show that $F$ has exactly $m^{*}$ poles in $D_{\rho_{m^{*}}(F)}.$
To the contrary, suppose that $F$ has in $D_{\rho_{m^{*}}(F)}$ at most $m^{*}-1$ poles. Then, there exists a polynomial $q_{m^{*}}$ with $\deg q_{m^{*}}< m^{*}$ such that
$$\rho_0(q_{m^{*}} F)=\rho_{m^{*}}(F)=\rho_0(Q_{m}q_{m^{*}} F).$$ Since $\deg q_{m^{*}}< m^{*},$ by the definition of $Q_{n,m,m^{*}},$ $[Q_{n,m,m^{*}} q_{m^{*}} F]_n=0.$ Take $1< \rho< \rho_{m^{*}}(F).$ Then, by Lemma \ref{expan},
$$\frac{1}{\rho_{m^{*}}(F)}=\limsup_{n \rightarrow \infty} |[Q_{m}q_{m^{*}} F]_n|^{1/n}=\limsup_{n \rightarrow \infty} |[Q_{m}q_{m^{*}} F-Q_{n,m,m^{*}} q_{m^{*}} F]_n|^{1/n}$$
$$=\limsup_{n \rightarrow \infty} \left| \frac{1}{2\pi i}  \int_{\Gamma_{\rho}}  \frac{(Q_{m}-Q_{n,m,m^*})(z)  q_{m^{*}}(z) F(z) \Phi'(z)}{\Phi^{n+1}(z)}    dz\right|^{1/n}.$$
From the equation above, using  \eqref{3.33321}, it is easy to show that
$$\frac{1}{\rho_{m^{*}}(F)}\leq \frac{\theta}{\rho_{m^{*}}(F)},$$ which is possible only if $\rho_{m^{*}}(F) = \rho_0(q_{m^*}F)=\infty.$ Let us show that this is not so.

\medskip

From \eqref{3.33321}, without loss of generality, we can assume that $\deg Q_{n,m,m^{*}}=m.$ Set
$$q_{m^{*}}(z)F(z) =\sum_{k=0}^{\infty} a_k \Phi_k(z)$$
and
$$Q_{n,m,m^{*}}(z)=\sum_{j=0}^{m}b_{n,j}z^{j},$$
where $b_{n,m}=1.$ From  \eqref{3.33321}, we have
\begin{equation}\label{suppart}
\sup\{|b_{n,j}|: 0\leq j \leq m, \, n \in \mathbb{N}\}\leq c_{1}.
\end{equation}
Since $[Q_{n,m,m^{*}} q_{m^{*}} F]_n=0$, $[ z^{j} \Phi_k]_n = 0$ whenever $\deg(z^{j} \Phi_k) < n$ and $[ z^{m} \Phi_{n-m}]_n = \mbox{cap}^m(E)$ (see \eqref{capa}), we obtain
$$0=[Q_{n,m,m^{*}} q_{m^{*}} F]_n=\sum_{k=0}^{\infty} \sum_{j=0}^{m} a_k b_{n,j} [ z^{j} \Phi_k]_n=\sum_{k=n-m}^{\infty} \sum_{j=0}^{m} a_k b_{n,j} [ z^{j} \Phi_k]_n$$
\begin{equation}\label{almostdone}
=\textup{cap}^m(E) a_{n-m}+\sum_{k=n-m+1}^{\infty} \sum_{j=0}^{m} a_k b_{n,j} [ z^{j} \Phi_k]_n.
\end{equation}
Take $\rho > 1$. Using Lemma \ref{estimate}, for $j=0,1,\ldots,m,$ and $k\geq n-m+1$, we obtain
\begin{equation}\label{wanted2}
[ |z^{j} \Phi_k]_n|=\left|\frac{1}{2\pi i}\int_{\Gamma_{\rho}}  \frac{z^j \Phi_k(z) \Phi'(z)}{ \Phi^{n+1}(z)} dz\right| \leq c_2 \frac{\rho^k}{\rho^n}.
\end{equation}
Combining \eqref{suppart}, \eqref{almostdone}, and \eqref{wanted2}, it follows that
\begin{equation}\label{wanted1}
|a_{n-m}| \rho^{n-m}\leq c_{3} \sum_{k=n-m+1}^{\infty} |a_k| \rho^{k}.
\end{equation}
Taking $n-m = N$ and $|a_k| \rho^{k} = A_k$, \eqref{wanted1} is $(ii)$ of Lemma \ref{trick} and we also have $(i)$ because
\[\lim_{N\to \infty} |A_N|^{1/n} = \lim_{N\to \infty}(|a_N| \rho^{N})^{1/N} =  \rho/\rho_0(q_{m^*}F) = 0.\]
Consequently, there exists $N_1\in \mathbb{N}$ such that $a_N=0$ for all $N \geq N_1.$
Thus, $q_{m^{*}}F$ is a polynomial and $F$ is a rational function with at most $m^{*}-1$ poles contradicting the assumption that $F$ is not a rational function with at most $m^{*}-1$ poles. So, $F$ has exactly $m^{*}$ poles in $D_{\rho_{m^{*}}(F)}$ as we wanted to prove.
\end{proof}

\subsection{Proof of Theorem \ref{inverse}}

Before proving the main result, let us point out several important ingredients.

\medskip

Given a system of functions ${\bf F}\in \mathcal{H}(E)^d$ and a multi-index ${\bf m}\in \mathbb{N}^d,$ the space generated through polynomial combinations of the form \eqref{polycom} has dimension $\leq |{\bf m}|$. Therefore, ${\bf F}$ can have at most $|{\bf m}|$ system poles with respect to ${\bf m}$ counting multiplicities since the functions which determine the system poles and their order are of the form \eqref{polycom} and they are obviously linearly independent. For more details, see \cite[Lemma 3.5]{CacoqYsernLopez}.

\medskip

The concept of polynomial independence of a vector of functions was introduced in \cite{CacoqYsernLopez} and is also useful in this context.

\begin{definition}\label{polydef}\textup{
A vector $\textup{\textbf{F}}=(F_1,\ldots,F_d)\in \mathcal{H}(E)^d$ is said to be \emph{polynomially independent with respect to $\textup{\textbf{m}}=(m_1,\ldots,m_d)\in \mathbb{N}^d$} if there do not exist polynomials $p_1,\ldots,p_d,$ at least one of which is non-null, such that
\begin{enumerate}
\item [(i)] $\deg p_{\alpha} < m_{\alpha},$ $\alpha=1,\ldots,d,$
\item [(ii)] $\sum_{\alpha=1}^d p_{\alpha} F_{\alpha}$ is a polynomial.
\end{enumerate}}
\end{definition}

According to the assumptions of Theorem \ref{inverse}, for all $n \geq n_0,$ the polynomial $Q_{n,\textup{\textbf{m}}}$ is unique and $\deg Q_{n,\textup{\textbf{m}}}=|\textup{\textbf{m}}|.$ This implies that ${\bf F}$ is polynomially independent with respect to ${\bf m}$ for, otherwise, it is easy to see that for all sufficiently large $n$ we can construct  $(n,\textup{\textbf{m}})$ simultaneous Pad\'{e}-Faber approximants of $\textup{\textbf{F}}$ with $\deg Q_{n,\textup{\textbf{m}}}< |\textup{\textbf{m}}|$, see \cite[Lemma 3.2]{CacoqYsernLopez}. Notice that if $\textup{\textbf{F}}$ is  polynomially independent with respect to ${\bf m}$, then for each $\alpha=1,\ldots,d,$ $F_{\alpha}$ is not a rational function with at most $m_{\alpha}-1$ poles. As we pointed out in Section \ref{subsection3.1}, for each $\alpha=1,\ldots,d,$ $Q_{n,\textup{\textbf{m}}}$  is a denominator of an $(n,|\textup{\textbf{m}}|,m_{\alpha})$ incomplete Pad\'{e}-Faber approximant of $F_{\alpha}.$
 Consequently, the assumptions of Theorem \ref{inverse} allow us to make use of Lemma \ref{mainlemma1} in its proof.

\medskip

Finally, one can reduce the proof of Theorem \ref{inverse} to the case when the multi-index ${\bf m}$ has all its components equal to $1$. Indeed, given ${\bf F}\in \mathcal{H}(E)^d$ and ${\bf m}\in \mathbb{N}^d,$ define
\begin{equation}\label{definefunction}
\overline{\textup{\textbf{F}}}:=(F_1,\ldots,z^{m_1-1}F_1, F_2,\ldots, z^{m_d-1}F_d)=(f_1,f_2,\ldots,f_{|\textup{\textbf{m}}|})
\end{equation}
and
\begin{equation}\label{definemulti}
\overline{\textup{\textbf{m}}}:=(1,1,\ldots,1)
\end{equation} with $|\overline{\textup{\textbf{m}}}|=|\textup{\textbf{m}}|.$ The following assertions are easy to verify:
\begin{enumerate}
\item [(i)] the systems of equations that define $Q_{n,\textup{\textbf{m}}}$ for $\textup{\textbf{F}}$ and  $\textup{\textbf{m}}$, and $Q_{n,\overline{\textup{\textbf{m}}}}$ for $\overline{\textup{\textbf{F}}}$ and $\overline{\textup{\textbf{m}}}$ are the same.
\item [(ii)] $\textup{\textbf{F}}$ is polynomially independent with respect to $\textup{\textbf{m}}$ if and only if $\overline{\textup{\textbf{F}}}$ is polynomially independent with respect to $\overline{\textup{\textbf{m}}}.$
\item [(iii)] the poles and system poles of $(\textup{\textbf{F}},{\bf m})$ and $(\overline{\textup{\textbf{F}}}, \overline{\bf m})$, as well as their orders, coincide.
\item [(iv)]  $\rho_{m}(\textup{\textbf{F}})=\rho_{m}(\overline{\textup{\textbf{F}}}),$ for all $m \in \mathbb{N}\cup\{0\}.$
\end{enumerate}

\begin{proof}[Proof of Theorem \ref{inverse}] As shown above, without loss of generality, we can restrict our attention to the analysis of $(\overline{\bf F},\overline{\bf m})$ defined in \eqref{definefunction} and \eqref{definemulti}. Notice that \eqref{polycom} reduces to taking linear combinations of the components of $\overline{\bf F}$.  We also have that $Q_{n,\textup{\textbf{m}}}=Q_{n,\overline{\textup{\textbf{m}}}}$ and   $\overline{\textup{\textbf{F}}}$ is polynomially independent with respect to $\overline{\textup{\textbf{m}}}$.

\medskip

The arguments used in the prove follow closely those employed in proving the inverse part of \cite[Theorem 1.4]{CacoqYsernLopez};

\medskip

Choose $\beta=1,\ldots,|\textup{\textbf{m}}|$. From Lemma \ref{mainlemma1}, either $D_{\rho_1(f_{\beta})}$ contains exactly one pole of $f_{\beta}$ and it is a zero of $Q_{|\textup{\textbf{m}}|},$ or $\rho_0(Q_{|\textup{\textbf{m}}|} f_{\beta})>\rho_{1}(f_{\beta}).$ Hence, $D_{\rho_{0}(\overline{\textup{\textbf{F}}})}\not=\mathbb{C}$ and the zeros of $Q_{|\textup{\textbf{m}}|}$ contain all the poles of $f_{\beta}$ on the boundary of $D_{\rho_{0}(f_{\beta})}$ counting their order. Moreover, the function $f_{\beta}$ cannot have on the boundary of $D_{\rho_{0}(f_{\beta})}$ singularities other than poles. Thus, the poles of $\overline{\textup{\textbf{F}}}$ on the boundary of $D_{\rho_0(\overline{\textup{\textbf{F}}})}$ are zeros of $Q_{|\textup{\textbf{m}}|}$ counting multiplicities and the boundary contains no other singularity but poles. Let us call them candidate system poles of $\overline{\textup{\textbf{F}}}$ and denote them by $a_1,\ldots,a_{n_1}$ taking account of their order. They constitute a first layer of candidate system poles of $\overline{\textup{\textbf{F}}}.$

\medskip

Since $\deg Q_{|\textup{\textbf{m}}|}=|\textup{\textbf{m}}|,$ $n_1\leq |\textup{\textbf{m}}|.$ If $n_1=|\textup{\textbf{m}}|,$ we are done finding candidate system poles. Let us assume that $n_1< |\textup{\textbf{m}}|$ and let us find coefficients $c_1,\ldots,c_{|\textup{\textbf{m}}|}$ such that
$\sum_{\beta=1}^{|\textup{\textbf{m}}|} c_{\beta} f_{\beta}$
is holomorphic in a neighborhood of $\overline{D}_{\rho_0(\overline{\textup{\textbf{F}}})}.$ For this purpose we solve a homogeneous system of $n_1$ linear equations with $|\textup{\textbf{m}}|$ unknowns. In fact, if $z=a$ is a candidate system pole of $\overline{\textup{\textbf{F}}}$ with multiplicity $\tau,$ we obtain $\tau$ equations choosing the coefficients $c_\beta$ so that
\begin{equation}\label{eq21}
\int_{|w-a|=\delta} (w-a)^{k} \left( \sum_{\beta=1}^{|\textup{\textbf{m}}|} c_{\beta} f_{\beta}(w)\right)dw=0, \quad \quad k=0,\ldots,\tau-1.
\end{equation}
We write the equations for each distinct candidate system pole on the boundary of $D_{\rho_0(\overline{\textup{\textbf{F}}})}.$ This homogeneous system of linear equations has at least $|\textup{\textbf{m}}|-n_1$ linearly independent solutions, which we denote by $\textup{\textbf{c}}_{j}^{1},$ $j=1,\ldots, |\textup{\textbf{m}}|-n_1^{*},$ where $n_1^{*}\leq n_1$ denotes the rank of the system of equations.

\medskip

Let
$$\textup{\textbf{c}}_{j}^{1}:=(c_{j,1}^1,\ldots, c_{j,|{\textup{\textbf{m}}}|}^{1}), \quad \quad j=1,\ldots,  |{\textup{\textbf{m}}}|-n_1^{*}.$$ Define the $(|{\textup{\textbf{m}}}|-n_1^{*})\times |{\textup{\textbf{m}}}|$ dimensional matrix
$$C^1:= \begin{pmatrix}
 \textup{\textbf{c}}_{1}^{1}  \\
  \vdots   \\
\textup{\textbf{c}}_{|{\textup{\textbf{m}}}|-n_1^{*}}^{1}
 \end{pmatrix}.$$
 Define the vector  $\textup{\textbf{g}}_1$ of  $|{\textup{\textbf{m}}}|-n_1^{*}$ functions given by
 $$\textup{\textbf{g}}_1^{t}:=C^1 \overline{\textup{\textbf{F}}}^t=(g_{1,1},\ldots,g_{1, |{\textup{\textbf{m}}}|-n_1^{*}})^{t},$$
 where $A^t$ denotes the transpose of the matrix $A.$ Since all the rows of $C^1$ are non-null and $\overline{\textup{\textbf{F}}}$ is polynomially independent with respect to $\overline{\textup{\textbf{m}}},$ none of the functions
 $g_{1,j}=\sum_{\beta=1}^{|{\textup{\textbf{m}}}|} c_{j,\beta}^{1} f_\beta, j=1,\ldots,|{\textup{\textbf{m}}}|-n_1^{*},$
 are  polynomials.

 \medskip

 Consider the canonical domain
 $$D_{\rho_0(\textup{\textbf{g}}_1)}=\bigcap_{j=1}^{|{\textup{\textbf{m}}}|-n_1^{*}} D_{\rho_0(g_{1,j})}.$$ Clearly, $ D_{\rho_0(\overline{\textup{\textbf{F}}})}\subsetneq D_{\rho_0(\textup{\textbf{g}}_1)}$ and $[Q_{n,\overline{\textup{\textbf{m}}}}g_{1,j}]_n=0$ for all  $j=1,\ldots,|{\textup{\textbf{m}}}|-n_1^{*}.$ Therefore, for each $j=1,\ldots,|\textup{\textbf{m}}|-n_1^{*},$ $Q_{n,\overline{\textup{\textbf{m}}}}$ is a denominator of an $(n,|\overline{\textup{\textbf{m}}}|,1)$ incomplete Pad\'{e}-Faber approximant of $g_{1,j}$. Since the $g_{1,j}$ are not polynomials, by Lemma \ref{mainlemma1} with $m^{*}=1,$ for each $j=1,\ldots,|{\textup{\textbf{m}}}|-n_1^{*},$ either $D_{\rho_1(g_{1,j})}$ contains exactly one pole of $g_{1,j}$ and it is a zero of $Q_{|\textup{\textbf{m}}|},$ or $\rho_{0}(Q_{|\textup{\textbf{m}}|}g_{1,j})>\rho_{1}(g_{1,j}).$ In particular, $D_{\rho_0(\textup{\textbf{g}}_1)}\not=\mathbb{C}$ and all the singularities of $\textup{\textbf{g}}_1$ on the boundary of $D_{\rho_0(\textup{\textbf{g}}_1)}$ are poles which are zeros of $Q_{|\textup{\textbf{m}}|}$ counting their order. They form the next layer of candidate system poles of $\overline{\textup{\textbf{F}}}.$

 \medskip

Denote by $a_{n_1+1},\ldots,a_{n_1+n_2}$ these new candidate system poles. Again, if $n_1+n_2=|\textup{\textbf{m}}|,$ we are done. Otherwise, $n_2<|\textup{\textbf{m}}|-n_1\leq |\textup{\textbf{m}}|-n_{1}^{*},$ and we repeat the same process eliminating the $n_2$ poles $a_{n_1+1},\ldots,a_{n_1+n_2}.$ We have $|\textup{\textbf{m}}|-n_{1}^{*}$ functions which are holomorphic on $D_{\rho_0(\textup{\textbf{g}}_1)}$ and meromorphic in a neighborhood of $\overline{D}_{\rho_{0}(\textup{\textbf{g}}_1)}.$ The corresponding homogeneous system of linear equations, similar to \eqref{eq21}, has at least $|\textup{\textbf{m}}|-n_1^{*}-n_2$ linearly independent solutions $\textup{\textbf{c}}_j^2,$ $j=1,\ldots,|\textup{\textbf{m}}|-n_1^{*}-n_2^{*},$ where $n_2^{*}\leq n_2$ is the rank of the new system. Let
$$\textup{\textbf{c}}_{j}^{2}:=(c_{j,1}^2,\ldots, c_{j,|{\textup{\textbf{m}}}|}^{2}), \quad \quad j=1,\ldots,  |{\textup{\textbf{m}}}|-n_1^{*}-n_2^{*}.$$ Define the $(|{\textup{\textbf{m}}}|-n_1^{*}-n_2^{*})\times (|{\textup{\textbf{m}}}|-n_1^{*})$ dimensional matrix
$$C^2:= \begin{pmatrix}
 \textup{\textbf{c}}_{1}^{2}  \\
  \vdots   \\
\textup{\textbf{c}}_{|{\textup{\textbf{m}}}|-n_1^{*}-n_2^{*}}^{2}
 \end{pmatrix}.$$
 Define the vector  $\textup{\textbf{g}}_2$ with  $|{\textup{\textbf{m}}}|-n_1^{*}-n_2^{*}$ functions given by
 $$\textup{\textbf{g}}_2^{t}:=C^2\textup{\textbf{g}}_1^t=C^2C^1 \overline{\textup{\textbf{F}}}^t=(g_{2,1},\ldots,g_{2, |{\textup{\textbf{m}}}|-n_1^{*}-n_2^{*}})^{t}.$$
As $C^1$  and $C^2$ have full rank, so does  $C^2 C^1$. Therefore, the rows of $C^2 C^1$ are linearly independent; in particular, they are non-null. Thus, all component functions of $\textup{\textbf{g}}_2$ are not polynomials, due to the polynomial independence of $\overline{\textup{\textbf{F}}}$ with respect to $\overline{\textup{\textbf{m}}},$ and we can apply again  Lemma \ref{mainlemma1}. The proof is completed using finite induction.

\medskip

On each layer of system poles, $1 \leq n_k\leq |\textup{\textbf{m}}|.$ Therefore, in a finite number of steps, say $N-1,$ their sum equals $|\textup{\textbf{m}}|.$  Consequently, the number of candidate system poles of $\overline{\textup{\textbf{F}}}$ in some canonical region, counting multiplicities, is exactly equal to $|\textup{\textbf{m}}|$ and they are precisely the zeros of $Q_{|\textup{\textbf{m}}|}$ as we wanted to prove.

\medskip

Summarizing, in the $N-1$ steps, we have produced $N$ layers of candidate system poles. Each layer contains $n_k$ candidates, $k=1,\ldots,N.$ At the same time, on each step $k,$ $k=1,\ldots,N-1,$ we have solved a linear system of $n_k$ equations, of rank $n_k^{*},$ with $|\textup{\textbf{m}}|-n_1^{*}-\cdots-n_k^{*},$ $n_k^{*}\leq n_k,$ linearly independent solutions. We find ourselves on the $N$-th layer which has $n_N$ candidate system poles.

\medskip

Let us try to eliminate the poles on the last layer. For that purpose, define the corresponding homogeneous system of linear equations as in \eqref{eq21}, and we get
$$n_N=|\textup{\textbf{m}}|-n_1-\cdots-n_{N-1}\leq |\textup{\textbf{m}}|-n_1^{*}-\cdots-n_{N-1}^{*}=:\overline{n}_N$$
equations with $\overline{n}_N$ unknowns. For each candidate system pole $a$ of multiplicity $\tau$ on the $N$-th layer, we impose the equations
\begin{equation}\label{eq22}
\int_{|w-a|=\delta} (w-a)^k\left(\sum_{\beta=1}^{\overline{n}_N} c_{\beta} g_{N-1,\beta}(w) \right)dw=0,\quad \quad k=0,\ldots,\tau-1,
\end{equation}
where $\delta$ is sufficiently small and the $g_{N-1,\beta},$ $\beta=1,\ldots,\overline{n}_N,$ are the functions associated with the linearly independent solutions produced on step $N-1.$

\medskip

Let $n_N^{*}$ be the rank of this last homogeneous system of linear equations. Assume that $n_k^{*}<n_k$ for some $k\in \{1,\ldots,N\}.$ Then, the rank of the last system of equations is strictly less than the number of unknowns, namely $n_N^{*}<\overline{n}_N.$ Repeating the same process, there exists a vector of functions $$\textup{\textbf{g}}_N:=(g_{N,1},\ldots, g_{N,|\textup{\textbf{m}}|-n_1^{*}-\cdots-n_{N}^{*}})$$ such that none of $g_{N,\beta}$ is a polynomial because of the polynomial independence of $\overline{\textup{\textbf{F}}}$ with respect to $\overline{\textup{\textbf{m}}}.$ Applying Lemma \ref{mainlemma1}, each $g_{N,\beta}$ has on the boundary of its canonical domain of analyticity a pole which is a zero of $Q_{|\textup{\textbf{m}}|}.$ However, this is impossible because all the zeros of $Q_{|\textup{\textbf{m}}|}$ are strictly contained in a smaller domain. Consequently, $n_k=n_k^{*},$ $k=1,\ldots,N.$

\medskip

We conclude that all the $N$ homogeneous systems of linear equations that we have solved have full rank. This implies that if in any one of those $N$ systems of equations we equate one equation to $1$ instead of zero (see \eqref{eq21} or \eqref{eq22}), the corresponding nonhomogeneous system of linear equations has a solution. By the definition of a system pole, this implies that each candidate system pole is indeed a system pole of order at least equal to its multiplicity as zero of $Q_{|\textup{\textbf{m}}|}.$ However,   $\overline{\textup{\textbf{F}}}$ can have at most $|\textup{\textbf{m}}|$ system poles with respect to $\overline{\textup{\textbf{m}}}$; therefore, all candidate system poles are system poles, and their order coincides with the multiplicity of that point as a zero of $Q_{|\textup{\textbf{m}}|}.$ This also means that $Q_{|\textup{\textbf{m}}|}=Q_{\textup{\textbf{m}}}^{\textup{\textbf{F}}}.$ We have completed the proof.
\end{proof}

\section{Acknowledgements}

\quad \quad  The authors would like to thank Dr. Olivier S\`ete for his suggestion for references of Lemma \ref{estimate}.

\noindent Nattapong Bosuwan\\
         Department of Mathematics,
         Mahidol University\\
         Rama VI Road, Ratchathewi District,\\
         Bangkok 10400, Thailand\\
and \\
 \noindent Centre of Excellence in Mathematics, CHE,\\
                 Si Ayutthaya Road,\\
                 Bangkok 10400, Thailand\\
\noindent email: nattapong.bos@mahidol.ac.th,\\

\noindent\\
Guillermo L\'opez Lagomasino \\
Mathematics Department,\\
Universidad Carlos III de Madrid, \\
c/ Universidad, 30\\
28911, Legan\'es, Spain \\
email: lago@math.uc3m.es

\end{document}